\newtheorem{remark}{Remark}
\def\L{{\mathcal L}}
\def\i{{\rm i}}
\def\RR{{\mathbb R}}
\def\SS{{\mathbb S}}
\author{Evgeny Y. Derevtsov\thanks{Sobolev Institute of Mathematics SB RAS, 4 Acad. Koptyug avenue, 630090 Novosibirsk, Russia ({\tt dert@math.nsc.ru})} \and Thomas Schuster\thanks{Department of Mathematics, Saarland University, PO Box 15 11 50, 66123 Saarbr\"ucken, Germany ({\tt thomas.schuster@num.uni-sb.de})} \and Yuriy S. Volkov\thanks{Sobolev Institute of Mathematics SB RAS, 4 Acad. Koptyug avenue, 630090 Novosibirsk, Russia ({\tt volkov@math.nsc.ru})}}
\title{Generalized attenuated ray transforms and their integral angular moments}
\begin{document}

\maketitle


\begin{abstract}
In this article generalized attenuated ray transforms (ART)
and integral angular moments are investigated. Starting
from the Radon transform, the attenuated ray transform and
the longitudinal ray transform, we derive the concept of
ART-operators of order $k$ over functions defined on the
phase space and depending on time. The ART-operators are generalized for
complex-valued absorption coefficient as well as weight functions of 
polynomial and exponential type. Connections between ART operators of
various orders are established by means of the application of the
linear part of a transport equation. These connections lead
to inhomogeneous differential
equations of order $(k+1)$  for the ART of order $k$. Uniqueness theorems
for the corresponding boundary-value and initial boundary-value problems 
are proved. Properties of
integral angular moments of order $p$ are
considered and connections between the moments of
different orders are deduced. A close connection of the considered
operators with mathematical models
for tomography, physical optics and integral geometry
allows to treat the inversion of ART of
order $k$ as an inverse problem of determining the right-hand 
side of a corresponding differential equation.
\end{abstract}

\begin{keywords}
tomography, attenuated ray transform, transport
equation, boundary-value problem, uniqueness
theorem, integral angular moment
\end{keywords}


\section{Introduction and preliminaries}

Many linear integral operators arising as mathematical models in computer and
emission tomography, wave optics, integral geometry of
tensor fields, can be classified as generalized
attenuated ray transforms (ART) that act on functions or
tensor fields.

In recent years a significant progress of emission tomography in biology and medicine
diagnostics can be observed, see~\cite{BudGulHue79,Natt86}. In
contrast to transmission computer
tomography, the mathematical setting of emission tomography problem
contains, in general, two unknown functions that have to
be reconstructed. The first function (absorption coefficient)
characterizes the absorption within the medium whereas the second
describes the distribution of internal sources whose
radiation is measured by detectors. The goal is to find the
distribution of internal sources $f$ and/or the absorption
coefficient $\varepsilon$ by given values of the attenuated
ray transform
 \begin{equation}\label{ART}
    I = \int\limits_{L} f(q) \exp \left(
    -\int_{L(q)} \varepsilon (p)\,dp
    \right) dq,
 \end{equation}
where $L(q)$ is the segment of the straight line $L$ between
a source point $q$ and the detector. In most mathematical settings of
emission tomography the absorption coefficient is
supposed to be known. Absorption phenomena also
arise in models of vector
tomography~\cite{KazBukh2007,Ainsw2013,FMon2016-1}. The
authors there investigate important theoretical questions
and use these as starting point to handle some aspects of vector tomography.

Another reason for investigating generalized ARTs are motivated by
the research area of physical optics, more specifically wave optics and
photometry.

We specifiy the mathematical models. Let a rectangular Cartesian coordinate system be
given in the Euclidean space $\RR^3$ with inner product $
\langle x, y \rangle $ and norm $| x |$ of elements $x =
(x^1, x^2, x^3)$, $y=(y^1, y^2, y^3) \in \RR^3$. We
denote the unit sphere by $\SS^2$, and 
$\SS \RR := \RR^3 \times \SS^2 = \{ (x,
\xi)\, |\, x \in \RR^3, \xi \in \SS^2 \}$. For a bounded
convex domain $D \subset \RR^3$ with smooth boundary
$\partial D$ we define $\SS D := \{
(x, \xi)\, |\, x \in D, \xi \in \SS^2 \}$. 
The sets $\SS \RR$ and $\SS D$ are known as the
spherical bundles for $\RR^3$ and $D$. The set of pairs
$(x,\xi) \in \SS \RR$ with fixed $x$ is denoted as
$\SS^2_x$. We assume that the domain $D$ contains a
distribution $f(q)$, $q \in D$, of sources of a
monochromatic wave field. Using the notation of {\it the
optical system}~\cite{Kirt75}, which represents a mathematical
formalization of a device like a camera, see Fig.~\ref{OS},
leads to the formulation of the direct problem of wave optics
consisting in solving a boundary-value problem
for the Helmholtz equation satisfying discontinuous
boundary conditions of Kirchhoff type, see~\cite{BornWolf73},
\begin{equation}\label{Helm_eq}
\left\{
\begin{array}{rr}
\Delta u + k^2 u & =0\\
u|_{\Sigma} & = u_n\\
u|_{L \backslash \Sigma} & = 0,
\end{array}
\right.
 \end{equation}
as well as the Sommerfeld radiation condition,
 \begin{equation}\label{Zomm_cond}
  \frac {\partial u}{\partial |x|} - \i ku
  = o \left( \frac {1}{|x|} \right) \quad
  \mbox{at}\,\, |x| \rightarrow \infty.
 \end{equation}
Here $u_n$ is a wave field hitting a diaphragm $\Sigma $,
$\i$ is the imaginary unit and $k$ is the wave number.
 \begin{figure}[h!]
  \centering
  \includegraphics[width=9cm,height=4.5cm]{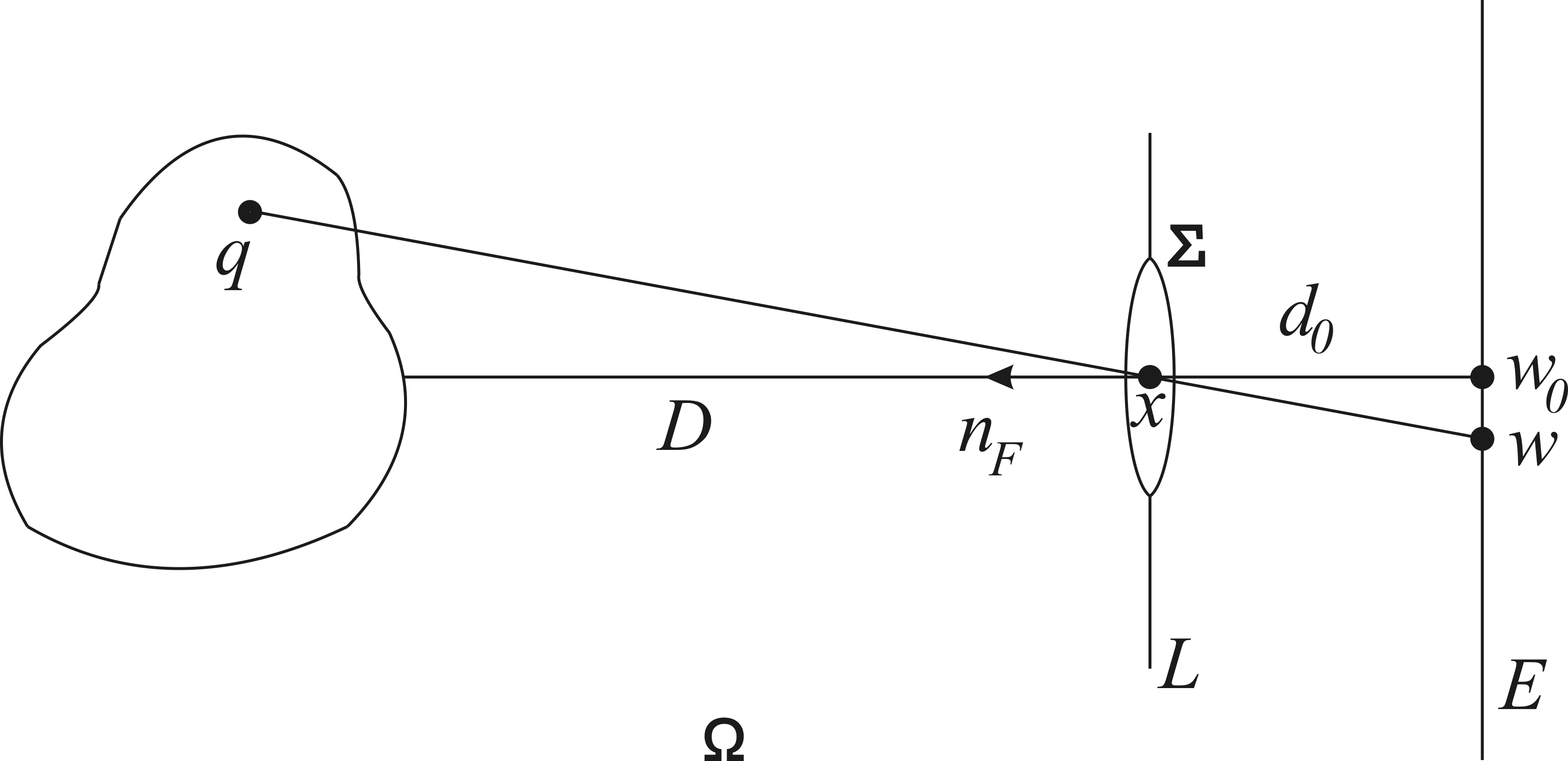}
  \caption{\small Scheme of optical system.}\label{OS}
 \vspace*{-3mm}
 \end{figure}
%
The optical system $\Omega$ depicted in Fig.~\ref{OS} consists of the plane $L$,
in which the diaphragm $\Sigma$ with center $x$ is located, $n_F$ denotes
the outer unit normal vector to $\Sigma$, $E$ is a screen with center $w_0$,
$d_0$ is the focal distance between $L$ and $E$ and $w$ the geometrical
image of a point $q\in D$.
An application of Green's function for the half-space yields
a solution $u$ of the direct problem for (\ref{Helm_eq}) 
represented as a Kirchhoff integral~\cite{BornWolf73}. Usually in
optics the Fraunhofer approximation for the far field is
used~\cite{Goodman70} which allows to simplify the
solution significantly. The obtained
approximate solution $u$ for (\ref{Helm_eq}) is then
represented as a convolution $u = \widetilde{u}_{\delta}
\ast \Delta$ with known kernel $\Delta$. Function
$\widetilde{u}_{\delta}$ is called {\it the ideal wave
image}~\cite{Kirt83} and has the representation
 \begin{equation}\label{Ideal_wave_im1}
  \widetilde{u}_{\delta}(x,\xi)
  = \int\limits_{0}^{\infty}
  s {e^{iks}} f (x-s\xi)
  d s.
 \end{equation}
A similar setting of the direct problem with incoherent sources
in $D$ and given constant absorption coefficient
$\varepsilon > 0$ leads to so called {\it ideal
photometric image}
 \begin{equation}\label{Ideal_phot_im0}
  \widetilde{u}_{\delta}(x,\xi) =
  \int\limits_{0}^{\infty}
  {e^{-\varepsilon s}} f (x-s\xi)
  d s.
 \end{equation}
The inverse problems of wave optics and photometry are
formulated as reconstruction problems for a function
$f$ which describes distributions of sources of a
monochromatic wave field or distributions of incoherent
sources. The initial data for these inverse problems are
integrals along all straight lines of $\RR^3$ in the
right-hand parts of (\ref{Ideal_wave_im1}),
(\ref{Ideal_phot_im0}). In other words we use the
integrals with lower limit $-\infty$ as initial
data for the corresponding inverse problems.

Tensor tomography traditionally has applications to the
problems of photo\-elas\-ticity and fiber
optics~\cite{AbPuro97}. New approaches and achievements emerge
in diffractive tomography of strains~\cite{LionWil14},
diffusion MRI-tomography and cross-polarization optic
coherent tomography~\cite{PaZeDefGu02,GelGel06}. 
Success of tensor tomography in studying of anisotropic
objects and materials in physics, geophysics, biology and
medicine has deep impact and is closely connected with the
progress in integral geometry of tensor fields, wherein
many types of ray transforms are
investigated~\cite{Shar94}, see furthermore
\cite{DerPol2014,SvDeVoSch2014,DerSve2015,DerMal2015}
for the 2D-case and \cite{FMon2016-2,FMon2018-1}
for arbitrary dimensions of Euclidean space or Riemannian manifolds.

As initial data for inverse problems in integral geometry connected to
tensor fields are in particular reprsented by the longitudinal
ray transform
 \begin{equation}\label{LRT}
  {\cal P } w (x,\xi) = \int\limits^{\infty}_{0}
  w_{i_{1} \ldots i_{m}}(x-s \xi)\xi^{i_{1}} \ldots
  \xi^{i_{m}} d s,
 \end{equation}
where $w_{i_{1} \ldots i_{m}}(x)$ is a symmetric tensor
field of rank $m$ ($m$-tensor field), $\xi^l$ are the
components of the vector of direction $\xi$, $|\xi|=1$, of a
straight line $L$ along which the integration is computed. 
Here and subsequently Einstein's summation rule is used, which says that
by repeating super- and subscripts in a monomial a
summation from $1$ to $n$ is meant ($n$ is the dimension of the
Euclidean space). The aim is to recover a
tensor field $w$ from given values of its longitudinal ray
transform (\ref{LRT}).

In this article the operators of ART 
for functions and tensor fields are generalized
in four different respects. At first, the functions $f(x)$ and
$\varepsilon(x)$ depend not only on the spatial
variable $x$, but also on the unit vector of direction $\xi$.
Secondly, the absorption coefficient $\varepsilon$
is a complex-valued function similar to those arising
in inverse scattering problems when Rytov's approach is applied~
\cite{MueKavWad79,BalJohSte80}, and then in the framework of
diffraction tomography, see, e.g.,\cite{Natt86}.
Next we take into account the concept of polynomially
weighted ray transform of tensor fields as
represented in~\cite{Shar94}. The final
generalization is connected with settings of dynamic
tomography~\cite{SchmLou02I,SchmLou02II,HahLou12} and
consists in consideration the situation that the internal sources
depend on time $t$. In other words we suppose the function
$f(t,x,\xi)$ to depend on time $t$, space $x$ and vector of direction
$\xi$. We use the notation $\alpha(x,\xi)$ for a
complex-valued function $\varepsilon(x,\xi) + \i
\rho(x,\xi)$.

{\it The attenuated ray transform (ART) of order $k$} is
defined by 
 \begin{equation}\label{TF_k-order}
  u_{k} (x,\xi)
  \displaystyle{ = \int\limits^{\infty}_0 s^k
  \exp \left( -\int^s_0
  \alpha(x-\sigma \xi,\xi) d \sigma \right)
  f(x-s\xi,\xi) d s.
  } 
 \end{equation}
Functions $\varepsilon \geq 0$, $\rho$ and $f$ are finite
and bounded with respect to the variable $x$. In
particular, 
the function $f(x,\xi)$ may have a form as
 \begin{equation}\label{Part-TFk}
  f(x,\xi) = w_{i_{1} \ldots i_{m}}(x)
  \xi^{i_{1}} \ldots \xi^{i_{m}}.
 \end{equation}
coinciding with the integrand in (\ref{LRT}).

Formula (\ref{TF_k-order}) defines a stationary ART of
order $k$. Suppose that $f$ is a function depending on time
$t$, too, and the propagation speed of
perturbation be, for simplicity, equal to unity. Then,
\begin{equation}\label{TF_k-order-t}
  u_{k}(t,x,\xi)
  \displaystyle{ = \int\limits^{\infty }_0 s^k
  \exp \left( -\int^s_0
  \alpha(x-\sigma \xi,\xi) d \sigma \right)
  f(t-s,x-s\xi,\xi) d s,
  } 
 \end{equation}
is called the {\it non-stationary ART of order $k$}. In
particular, the function $f(t,x,\xi)$ may be defined as
$$
  f(t,x,\xi) = w_{i_{1} \ldots i_{m}}(t,x)
  \xi^{i_{1}} \ldots \xi^{i_{m}}.
$$

Inversion formulas for the Radon transform ${\mathcal{R}}
\varphi$ of a function $\varphi(x)$, such as
 \begin{equation}\label{Gl0-Rad_Tr}
    {\mathcal{R}}\varphi (\eta,s)
    = \int\limits_{R^3}
    \varphi(x) \delta(\langle \eta,x \rangle - s) d x,
 \end{equation}
where $\delta(\cdot)$ denotes the $\delta$-function on $R$ and $\eta$
is the unit normal vector to the plane of integration,
do often involve the back-projection operator
${\mathcal{R}}^{\#} $, acting on the Radon transform,
 \begin{equation}\label{Gl0-BP_oper}
   {\mathcal{R}}^{\#}g (x)
   = \frac{1}{4 \pi^2}
   \int\limits_{\SS^2_x} g(\eta, \langle \eta,x \rangle)
   d \lambda_x(\xi), 
 \end{equation}
where $g(\eta,s)$ $=$ ${\mathcal{R}}\varphi (\eta,s)$ and
$d\lambda_x(\xi)$ is the angular measure on the sphere
$\SS^2_x$.

The back-projection operator acting on the longitudinal
ray transform (\ref{LRT}) is represented by
 \begin{equation}\label{Gl0-Inv-PV-m(j)}
  {\mathcal{P}}^{\#}
  h (x)
  = \frac{1}{4 \pi^2} \int\limits_{\SS^2_x}
  \xi^{i_{1}} \ldots \xi^{i_{m}}
  h(x,\xi)\,
  d \lambda_x(\xi),
 \end{equation}
where $h(x,\xi)$ $=$ ${\mathcal{P}}w (x,\xi)$ (see
formula (\ref{LRT})). The image of the back-projection
operator (\ref{Gl0-Inv-PV-m(j)}) is a symmetric tensor
field $\mu_{m}^{i_1 \ldots i_m}$ of rank $m$, $\mu_{m} :=
{\mathcal{P}}^{\#} {\mathcal{P}} w (x)$.

In the articles
\cite{DerSve2015,DerMal2015,DerMalSve2015-2,DerMalSve2018}
integral angular moments in 2D have been introduced as
generalizations of back-projection operators. 
The suggested operators, acting on arbitrary
tomographic transforms, are of utmost importance for 
recovering the singular support of a tensor field
by its known ART similar to (\ref{TF_k-order}). The
mathematical model is convenient for a medium with
refraction and variable absorption coefficient which may
be unknown. In this article the integral angular moments are
defined in a more general way then in 2D only. They map a
generalized ART of order $k$, $u_k$, with respect to functions $f(x,\xi)$ or
$f(t,x,\xi)$ to 3D-symmetric tensor fields.

As mentioned before the definition of the integral
angular moment ${\cal E}_{kp}^{i_1\dots i_p}(u)$ of ART
$u_k$ of order $k$ can be seen as a generalization of the back-projection
operator. It is defined by
 \begin{equation}\label{Ang_p-moment0}
  {\cal E}_{kp}^{i_1\dots i_p} u (x)
  = \int\limits_{\SS^2_x} u_k(x,\xi)
  \xi^{i_1} \dots \xi^{i_p}\,
  d \lambda_x(\xi), \quad p \geq 0.
 \end{equation}
In particular, if $f(x,\xi)$ is given as in (\ref{Part-TFk}), then
the integral angular moment (\ref{Ang_p-moment0})
coincides with the back-projection operator of the longitudinal
ray transform.

{\it Outline.} The first section of the article deals with certain connections
between generalized ART of different orders. We deduce
differential equations of order $k+1$, whose solutions
are the ART of order $k$ acting on a function $f(x,\xi)$
or, in the general case, on a $m$-tensor field $w(x)$.
Differential equations of the first order coincide with
stationary and non-stationary transport equations with
right-hand side $f(x,\xi)$, complex-valued
absorption coefficient $\alpha(x,\xi)$, and without the integral
part that describes the
scattering~\cite{KeizZwa72}. The second section is devoted to
proofs of uniqueness theorems for the emerging boundary-value and
initial boundary-value problems of arbitrary order. The third section contains
properties
of the integral angular moments
over ART of order $k$. Connections of certain
differential operators and the operators of divergence
with the operators of integral angular moments are
established. We conclude the article by emphasizing the connections
between generalized ART and different mathematical models
of tomography, physical optics and integral geometry and
an outlook to future research in the field.


\section{Main equations}

We start by defining differential equations whose solutions are 
attenuated ray transforms (ART)
of order $k$ of functions $f(x,\xi)$.

The operator ${\mathcal H}$, acting on functions $\psi(x, \xi)$ that are differentiable on
$\SS \RR$, is defined in
invariant form by
 \begin{equation}\label{H_def}
  {\mathcal H}\psi (x,\xi)
  = \frac{d}{d \tau} \psi(x+\tau \xi,\xi)
  \Big|_{\tau=0}.
 \end{equation}
In a rectangular Cartesian coordinate system the operator
is represented as
 \begin{equation}\label{H_coord}
  {\mathcal H}\psi (x,\xi) =
  \xi^1 \frac{\partial \psi}{\partial x^1}
  + \xi^2 \frac{\partial \psi}{\partial x^2} + \xi^3
  \frac{\partial \psi}{\partial x^3}
  = \langle \xi\,, \nabla \psi \rangle
  = {\rm div}\,(\psi \xi).
 \end{equation}

 \begin{lemma}\label{lemma1}
Let $\varepsilon(x,\xi)\ge 0$, $\rho(x,\xi)$, $f(x,\xi)$
be elements of $C^1(\SS D)$. Then for $u_k(x,\xi)$
defined by \eqref{TF_k-order}, $k \geq 1$ being an integer, the identity
 \begin{equation}\label{H_eq_1}
  \big( ({\mathcal H} + \alpha)u_{k} \big)(x,\xi)
  = k\,u_{k-1}(x,\xi)
 \end{equation}
is valid.
 \end{lemma}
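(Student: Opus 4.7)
My plan is to apply $\mathcal{H}$ directly using its invariant definition \eqref{H_def}, then differentiate the integral representation of $u_k$ term-by-term after a change of variables that isolates the $\tau$-dependence.

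First I would compute $u_k(x+\tau\xi,\xi)$ from \eqref{TF_k-order} and perform the substitution $s \mapsto s+\tau$ in the outer integral, together with $\sigma \mapsto \sigma+\tau$ in the inner integral. The benefit is that the spatial argument of $f$ becomes $f(x-s\xi,\xi)$ with no $\tau$, and the inner exponent becomes $-\int_{-\tau}^{s}\alpha(x-\sigma\xi,\xi)\,d\sigma$; all the $\tau$-dependence is collected in (i) the lower limit $-\tau$ of the outer integral, (ii) the factor $(s+\tau)^k$, and (iii) the lower limit $-\tau$ of the inner integral inside the exponent.

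Next, I would differentiate at $\tau=0$ using Leibniz's rule, exchanging $d/d\tau$ with $\int$ (justified by the $C^1$ hypotheses together with boundedness/finiteness of $\varepsilon,\rho,f$ in $x$). The three contributions are: the boundary term from the outer lower limit, which evaluates the integrand at $s=0$ and vanishes because of the factor $s^k$ with $k\geq 1$; the derivative of $(s+\tau)^k$, which produces $k s^{k-1}$ in the integrand and thus yields exactly $k\,u_{k-1}(x,\xi)$; and the derivative of the exponent, which by the fundamental theorem of calculus contributes $-\alpha(x-(-\tau)\xi,\xi)\cdot(-1) = \alpha(x+\tau\xi,\xi)$ inside the sign change from the exponent, giving $-\alpha(x,\xi)$ at $\tau=0$ and producing the term $-\alpha(x,\xi)\,u_k(x,\xi)$. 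Summing yields
\[
\mathcal{H} u_k(x,\xi) = k\,u_{k-1}(x,\xi) - \alpha(x,\xi)\,u_k(x,\xi),
\]
which rearranges to \eqref{H_eq_1}.

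The only delicate points are bookkeeping the signs in the two substitutions (a standard source of errors with translation-invariance arguments of this form) and verifying that differentiation under the integral is legitimate, which follows from the $C^1$ regularity on $\SS D$ together with the assumed boundedness in $x$ so that the integrands and their $\tau$-derivatives admit an integrable majorant. The condition $k\geq 1$ is essential precisely to kill the boundary term at $s=0$; for $k=0$ one would obtain an extra $f(x,\xi)$ contribution, which anticipates the inhomogeneous transport equation mentioned in the introduction.
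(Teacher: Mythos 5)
Your argument is correct and reaches the same identity $\mathcal{H}u_k = k\,u_{k-1}-\alpha u_k$, but it gets there by a different computational route than the paper. The paper differentiates $u_k(x+\tau\xi,\xi)$ under the integral sign directly, introduces the auxiliary one-variable functions $\varphi(v)=f(x+v\xi,\xi)$ and $\psi(v)=\alpha(x+v\xi,\xi)$ to trade the $\tau$-derivatives for $s$- and $\sigma$-derivatives via the sign flip $\frac{d}{d\tau}\varphi(\tau-s)=-\frac{d}{ds}\varphi(\tau-s)$, and then integrates by parts in $s$; the boundary term (killed by $s^k$ at $s=0$ and by the compact support of $f$ at $s=\infty$), the $k s^{k-1}$ term, and the cancellation producing $-\alpha u_k$ all emerge from that integration by parts, as displayed in \eqref{Diff_Hu_m}. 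You instead push the translation into the integration variables ($s\mapsto s+\tau$, $\sigma\mapsto\sigma+\tau$) so that all $\tau$-dependence sits in the limits of integration and in the factor $(s+\tau)^k$, and then a single application of the Leibniz rule delivers the same three contributions with no integration by parts at all. Your version is arguably cleaner: it needs only the vanishing of $s^k$ at $s=0$ (the point where $k\ge 1$ enters, exactly as you note), it avoids the $s\to\infty$ boundary term entirely, and the sign bookkeeping is confined to two applications of the fundamental theorem of calculus. The paper's version, on the other hand, sets up the machinery (the $\varphi,\psi$ reduction and the partial integration) that it reuses almost verbatim for Lemma~\ref{lemma2} ($k=0$) and for the non-stationary case in Lemma~\ref{lemma3}. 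Your closing observation that $k=0$ would instead produce the extra term $f(x,\xi)$ from the outer boundary is exactly the content of Lemma~\ref{lemma2}.
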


 \begin{proof}
We prove relation (\ref{H_eq_1}) directly. By
definition (\ref{H_def}),
$$
 \begin{array}{rcl}
  {\mathcal H}u_{k} (x,\xi)
&  = & {\displaystyle\frac{d}{d \tau} u_k(x + \tau \xi,\xi)
  \Big|_{\tau=0} }  \\[3ex]
& = & {\displaystyle \int\limits^{\infty }_0
  \hspace*{-1mm} s^k \left.\frac{d}{d \tau} \exp \left(
  \hspace*{-1mm} - \hspace*{-1mm}
  \int^s_0 \alpha(x
  + (\tau - \sigma)\xi,\xi)
  d \sigma \right) \right|_{\tau=0}
  f (x - s \xi,\xi) d s } \\[4ex]
&& {\displaystyle + \int\limits^{\infty }_0 \hspace*{-1mm}
  s^k \exp \left(
  \hspace*{-1mm} - \hspace*{-1mm}
  \int^s_0\alpha(x - 
  \sigma \xi,\xi) d \sigma \right)  \frac{d}{d\tau }
  f \big(x + (\tau-s) \xi,\xi\big)  \Big|_{\tau=0}  d s }.
 \end{array}
$$
For fixed $x$ and vector $\xi$ the functions
$f(x+v\xi,\xi)$ and $\alpha(x+v\xi,\xi)$ can be treated
as functions depending only on a real variable $v$,
$$
  f (x+v\xi) = \varphi(v), \quad \alpha(x+v\xi,\xi)
  = \psi(v).
$$
Then,
$$
  \frac{d}{d \tau} \varphi(\tau - s)
  = -\frac{d}{ds} \varphi(\tau - s), \quad
  \frac{d}{d \tau} \psi(\tau-\sigma)
  = -\frac{d}{d \sigma} \psi(\tau - \sigma),
$$
and hence
$$
 \begin{array}{rcl}
  {\mathcal H}u_{k} (x,\xi)
& = & \displaystyle{\int\limits^{\infty }_0
  s^k \varphi(-s)
  \exp \left( -\int^s_0
  \psi(-\sigma) d \sigma \right)
  \int^s_0 \frac{d}{d \sigma}
  \psi(\tau - \sigma)\Big|_{\tau=0} d \sigma \,
  d s }  \\[4ex]
& & \displaystyle{ - \int\limits^{\infty}_0 s^k
  \exp \left(
  - \int^s_0 \psi(-\sigma) d \sigma \right)
  \left. \frac{d \varphi(\tau-s)}{ds}
  \right|_{\tau=0} d s }.
 \end{array}
$$
Passing to the limit $\tau \rightarrow 0$ and applying
integrating by parts in the second term of the right-hand part yield
 \begin{equation}\label{Diff_Hu_m}
 \begin{array}{rcl}
  {\mathcal H}u_{k} (x,\xi)
  &  = & \displaystyle{\int\limits^{\infty }_0
  s^k \exp \left( -\int^s_0
  \psi(-\sigma) d \sigma \right)
  \varphi(-s) \big[ \psi(-s) - \psi(0)
  \big] d s }\\[4ex] 
 && \displaystyle{} - s^k \exp
  \left( -\int^s_0 \psi(-\sigma)
  d \sigma \right) \varphi(-s)
  \Big|^{\infty}_{s=0}  \\[4ex]
 && {}+\displaystyle{\, k \int\limits^{\infty }_0
  s^{k-1} \exp
  \left( -\int^s_0 \psi(-\sigma)
  d \sigma \right) \varphi(-s) d s } 
  \vspace*{1mm} \\[4ex]
 && {} -\displaystyle{ \int\limits^{\infty}_0 s^k \exp
  \left( - \int^s_0
  \psi(-\sigma) d \sigma \right)
  \varphi(-s) \left[ \frac{d}{ds}
  \int^s_0
  \psi(-\sigma) d \sigma \right] d s }.
 \end{array}
 \end{equation}
Since the function $f(x,\xi)$ is finite with respect to $x$,
$s^k = 0$ at $k \geq 1$ and $s=0$, the second term vanishes for $s
\rightarrow \infty $ and $s=0$. The third term
on the right-hand side is equal to $k u_{k-1}(x,\xi)$ according to
(\ref{TF_k-order}). Summing up the first and the last
items we obtain $-\alpha(x,\xi) u_{k}(x,\xi)$. Thus,
$$
  {\mathcal H}u_{k} (x,\xi)
  = k u_{k-1} (x,\xi) - \alpha(x,\xi)
  u_{k}(x,\xi),
$$
and the equality (\ref{H_eq_1}) is proved.
 \end{proof}
 
\medskip

 \begin{lemma}\label{lemma2}
Let $\varepsilon(x,\xi)\ge 0$, $\rho(x,\xi)$, $f(x,\xi)$
be elements of $C^1(S D)$. Then the
function $u_{0}(x,\xi)$ defined by \eqref{TF_k-order} is a solution
of the equation
 \begin{equation}\label{H_eq_RP=mu}
  ({\mathcal H} + \alpha)u_{0} (x,\xi)
  = f(x,\xi).
 \end{equation}
 \end{lemma}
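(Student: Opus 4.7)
The plan is to repeat the direct computation used in the proof of Lemma~\ref{lemma1}, but with careful attention to what changes when $k=0$. Writing $\varphi(v) = f(x+v\xi,\xi)$ and $\psi(v) = \alpha(x+v\xi,\xi)$ as in the preceding proof, I would apply $\mathcal{H}$ to $u_0$ by differentiating in $\tau$ under the integral sign both in the exponential and in the $f$-argument, using $\frac{d}{d\tau}\varphi(\tau-s) = -\frac{d}{ds}\varphi(\tau-s)$ to convert the $\tau$-derivative of $f$ into an $s$-derivative suitable for integration by parts.

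After integration by parts, I expect the analogue of formula~\eqref{Diff_Hu_m} with $k=0$, namely a sum of four contributions. The first and fourth contributions combine (exactly as in Lemma~\ref{lemma1}) into $-\alpha(x,\xi)\,u_0(x,\xi)$, since their sum is $\int_0^\infty \exp(-\int_0^s \psi(-\sigma)\,d\sigma)\,\varphi(-s)\bigl[\psi(-s) - \psi(0) - \psi(-s)\bigr]\,ds = -\psi(0)\,u_0(x,\xi)$. The third contribution, which was $k u_{k-1}$ in Lemma~\ref{lemma1}, now vanishes because the prefactor is $k=0$.

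The critical difference lies in the second (boundary) term $-s^k \exp(-\int_0^s\psi(-\sigma)\,d\sigma)\,\varphi(-s)\big|_{s=0}^{\infty}$. In Lemma~\ref{lemma1}, the factor $s^k$ killed this term at $s=0$ for $k\ge 1$; for $k=0$ the factor equals $1$, so the lower endpoint contributes $+\varphi(0) = +f(x,\xi)$, while the upper endpoint still vanishes because $f$ has compact support in $x$. This is the one nontrivial step that must be handled differently from the previous lemma.

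Putting the three surviving contributions together yields
\[
  \mathcal{H} u_0(x,\xi) = -\alpha(x,\xi)\,u_0(x,\xi) + f(x,\xi),
\]
which is exactly~\eqref{H_eq_RP=mu}. The main (and essentially the only) obstacle is to justify the boundary evaluation at $s=0$ and the vanishing at $s=\infty$; both follow from the standing assumption that $f$ is finite and bounded in $x$, together with $\varepsilon\ge 0$ ensuring boundedness of the exponential weight.
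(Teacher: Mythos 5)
Your proof is correct and follows essentially the same route as the paper: both repeat the direct computation of Lemma~\ref{lemma1} for $k=0$, observe that the $k\,u_{k-1}$ term disappears, that the two $\psi$-integrals combine to $-\alpha\,u_0$, and that the boundary term now contributes $\varphi(0)=f(x,\xi)$ at $s=0$ while vanishing at infinity by the compact support of $f$. The only difference is cosmetic bookkeeping in how the four contributions are grouped before cancellation.
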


 \begin{proof}
Arguments that are similar to those of the proof of lemma
\ref{lemma1} lead to
following equality,
 \begin{equation}\label{Diff_Hu_m_0}
 \begin{array}{rcl}
  {\mathcal H}u_{0} (x,\xi)
  & = & \displaystyle{ \int\limits^{\infty }_0
  \varphi(-s) \exp \left( -\int^s_0
  \psi(-\sigma) d \sigma \right)
  \psi(-s) d s }\\[4ex] 
 && {} -\displaystyle{ \int\limits^{\infty }_0
  \varphi(-s) \exp \left( -\int^s_0
  \psi(-\sigma) d \sigma \right)
  \psi(0) d s }\\[4ex]
 && {} - \displaystyle{\, \exp
  \left( -\int^s_0 \psi(-\sigma)
  d \sigma \right) \varphi(-s)
  \Big|^{\infty}_{s=0} } \\[4ex]
 && {} - \displaystyle{ \int\limits^{\infty}_0
  \varphi(-s) \exp
  \left( - \int^s_0
  \psi(-\sigma) d \sigma \right)
  \left[ \frac{d}{ds}
  \int^s_0
  \psi(-\sigma) d \sigma \right] d s }
 \end{array}
 \end{equation}
instead of (\ref{Diff_Hu_m}). It is easy to see that the first and the last terms on the right-hand side
differ only by signs and so their sum is equal
to zero. The second term is equal to $-\psi(0)\,u_{0}
(x,\xi)$, and the third is equal to $\varphi(0)$. As
$\varphi(0) = f(x,\xi)$, $\psi(0) = \alpha(x,\xi)$ we have
${\mathcal H} u_{0}(x,\xi) = f(x,\xi)
- \alpha (x,\xi) u_0(x,\xi)$ and the statement of
lemma \ref{lemma2} is verified.
 \end{proof}

We consider the important partial case if the function
$f(x,\xi)$ in (\ref{TF_k-order}) has a form
(\ref{Part-TFk}) which is a $m$-homogeneous polynomial in $\xi^j$.
The set of all given symmetric $m$-tensor
fields $w(x)=(w_{i_1 \ldots i_m}(x))$,
$i_1,\ldots,i_m=1,2,3$, in $\RR^3$ or in $D$ is denoted by $S^m(D)$. For simplicity we usually
write $S^m$. The scalar product
in $S^m$ is defined by 
 \begin{equation}\label{Gl0-In_Pr-Sm}
  \langle u(x) , v(x) \rangle
  = u_{i_1 \ldots i_m}(x) v^{i_1 \ldots i_m}(x),
 \end{equation}
and we use the notation $\langle w , \xi^m
\rangle$ for the sum $w_{i_{1} \ldots i_{m}} \xi^{i_{1}}
\ldots \xi^{i_{m}}$. In
Euclidean space with rectangular Cartesian coordinate
system there is no difference between contravariant and
covariant components of tensors. In this article we usually use covariant
components of tensors.

\medskip

 \begin{corollary}\label{corol1}
Let $\varepsilon(x,\xi)\ge 0$, $\rho(x,\xi)$, $f(x,\xi)$
be elements of $C^1(\SS D)$. Let the function
$f(x,\xi)$ in \eqref{TF_k-order} be of a form $\langle
w(x) , \xi^m \rangle$ \eqref{Part-TFk}, where $w(x)$, $x
\in D$, is a symmetric $m$-tensor field, $m
\geq 0$ being an integer. Then $u_0(x,\xi)$ is a solution of
the equation
 \begin{equation}\label{TF_H_eq_1}
  \big( ({\mathcal H} + \alpha)u_{0} \big)(x,\xi)
  = \langle w , \xi^m \rangle.
 \end{equation}
 \end{corollary}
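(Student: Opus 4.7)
The plan is to obtain the corollary as an essentially immediate specialization of Lemma~\ref{lemma2}. The only thing to check is that the hypotheses of the lemma remain satisfied for the particular choice
\[
  f(x,\xi) = \langle w(x), \xi^m \rangle = w_{i_1 \ldots i_m}(x)\,\xi^{i_1} \ldots \xi^{i_m},
\]
after which the conclusion \eqref{TF_H_eq_1} is just \eqref{H_eq_RP=mu} with the right-hand side rewritten.

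First I would verify the regularity: since $w \in S^m(D)$ is a symmetric tensor field whose components we may assume sufficiently smooth (at minimum $C^1$ in $x$), and since the monomial $\xi^{i_1}\!\cdots\xi^{i_m}$ is polynomial (hence $C^\infty$) in $\xi$, the function $f(x,\xi)=\langle w(x),\xi^m\rangle$ lies in $C^1(\SS D)$. Together with the assumed regularity of $\varepsilon$ and $\rho$, the hypotheses of Lemma~\ref{lemma2} are fulfilled.

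Next, $u_0(x,\xi)$ defined by \eqref{TF_k-order} with this particular $f$ is exactly the object to which Lemma~\ref{lemma2} applies, so we obtain
\[
  ({\mathcal H} + \alpha)u_0(x,\xi) = f(x,\xi) = \langle w(x),\xi^m \rangle,
\]
which is precisely \eqref{TF_H_eq_1}.

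There is essentially no obstacle here; the corollary is a direct instantiation. The only minor point to flag is that the action of ${\mathcal H}$ on $u_0$ passes through the $\xi$-dependence of the integrand (both in $\alpha(x-\sigma\xi,\xi)$ and in $f(x-s\xi,\xi)=\langle w(x-s\xi),\xi^m\rangle$), but since the proof of Lemma~\ref{lemma2} used only the $C^1$-dependence of $f$ on the spatial argument and the boundedness/finiteness needed to kill the boundary term at $s\to\infty$, no additional computation is required: the polynomial dependence on $\xi$ is simply carried along as a parameter.
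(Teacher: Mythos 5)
Your proof is correct and takes essentially the same route as the paper: both obtain the corollary by specializing Lemma~\ref{lemma2} to the choice $f(x,\xi)=\langle w(x),\xi^m\rangle$, after noting this function satisfies the lemma's hypotheses. The only cosmetic difference is that the paper applies the lemma componentwise to each $w_{i_1\ldots i_m}$ and then pulls the monomial $\xi^{i_1}\cdots\xi^{i_m}$ out of the integral by linearity (its identity \eqref{Lin_Prop_ART}), whereas you apply the lemma directly to the full polynomial $f$ with the $\xi$-dependence carried as a parameter, which is an equally valid and slightly more economical reading.
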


 \begin{proof}
Since lemma \ref{lemma2} is valid for any function
$f(x,\xi)$, it also holds for each component
$w_{i_{1} \ldots i_{m}}(x)$ of symmetric $m$-tensor field
$w(x)$. We take into account now the linearity of the inner product and get
 \begin{equation}\label{Lin_Prop_ART}
 \begin{array}{c}
  \displaystyle{ \int\limits^{\infty}_0 
  s^k \exp \left( 
  - 
  \int^s_0 
  \alpha(x-\sigma \xi, \xi) d \sigma \right)
  \langle w(x-s\xi)\, ,\, \xi^m \rangle d s } \\[4ex]
  \quad = \displaystyle{ 
   \left \langle \xi^m \,, 
  \,\int\limits^{\infty}_0 
  s^k \exp \left( 
  - 
  \int^s_0 
  \alpha(x-\sigma \xi, \xi) d \sigma \right)
  w(x-s \xi) d s \right \rangle }.
 \end{array}
 \end{equation}
This gives (\ref{TF_H_eq_1}).
 \end{proof}

We define an operator $\L_k$ $:$ $C^k(\SS \RR)$ $\rightarrow$
$C(\SS \RR)$ by
$$
 \hspace*{-7mm}
  \L_1 \psi (x,\xi)
  = ({\mathcal H} + \alpha)\psi (x,\xi),
$$
$$
 \hspace*{10mm}
  \L_k \psi (x,\xi)
  = \frac{1}{k-1}({\mathcal H} + \alpha)
  \L_{k-1} \psi (x,\xi),
$$
for $k>1$ being an integer.

\medskip

 \begin{theorem}\label{theorem1} Let $\varepsilon(x,\xi)
\geq 0$, $\rho(x,\xi)$, $f(x,\xi)$ 
be elements of $C^{k+1}(\SS D)$, $w(x) \in C^{k+1}(S^m)$,
$x \in D$, be a symmetric $m$-tensor field. Then for
an integer $k \geq 0$, the function $u_k(x,\xi)$ is a
solution of the equation
 \begin{equation}\label{L_eq_RP=mu}
  \L_{k+1}u_{k} (x,\xi)
  = f(x,\xi)
 \end{equation}
In particular for $f(x,\xi) = \langle w(x), \xi^m
\rangle$, the equality for $m \geq 0$
 \begin{equation}\label{L_eq_RP=TF-m}
  \L_{k+1}u_{k} (x,\xi)
  = \big \langle w(x) , \xi^m \big\rangle
 \end{equation}
is valid with $u_{k}(x,\xi)$ defined by
\eqref{TF_k-order}.
 \end{theorem}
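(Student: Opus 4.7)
The plan is to prove \eqref{L_eq_RP=mu} by induction on $k$, with Lemma \ref{lemma1} providing the inductive step and Lemma \ref{lemma2} providing the base case. The equation \eqref{L_eq_RP=TF-m} will then follow immediately from \eqref{L_eq_RP=mu} applied to $f(x,\xi)=\langle w(x),\xi^m\rangle$ (the $C^{k+1}$ regularity assumption on $w$ ensures this $f$ is $C^{k+1}$ on $\SS D$, so that $\L_{k+1}$ acts on it meaningfully through $u_k$).

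The base case $k=0$ is exactly Lemma \ref{lemma2}, since by definition $\L_1 u_0 = (\mathcal H+\alpha)u_0 = f$. For the inductive step, it is cleanest to first unwind the recursive definition of $\L_k$ to obtain the closed form
$$
\L_{k+1}\psi = \frac{1}{k!}(\mathcal H+\alpha)^{k+1}\psi,
$$
which follows by a trivial induction on $k$ from $\L_{k+1}=\frac{1}{k}(\mathcal H+\alpha)\L_k$ with $\L_1=\mathcal H+\alpha$. Applied to $\psi=u_k$, this reduces \eqref{L_eq_RP=mu} to showing $(\mathcal H+\alpha)^{k+1}u_k = k!\,f$.

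I would then iterate Lemma \ref{lemma1}, which says $(\mathcal H+\alpha)u_j = j\,u_{j-1}$ for $j\ge 1$. A quick induction gives
$$
(\mathcal H+\alpha)^{j}u_k \;=\; \frac{k!}{(k-j)!}\,u_{k-j} \quad\text{for}\quad 0\le j\le k,
$$
so in particular $(\mathcal H+\alpha)^{k}u_k = k!\,u_0$. Applying one more $(\mathcal H+\alpha)$ and invoking Lemma \ref{lemma2} yields $(\mathcal H+\alpha)^{k+1}u_k = k!\,(\mathcal H+\alpha)u_0 = k!\,f$, which divided by $k!$ gives \eqref{L_eq_RP=mu}.

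I do not expect any serious obstacle: all the analytic work (differentiation under the integral sign, integration by parts, control of the boundary term at infinity via the compact support of $f$ in $x$) is already packaged into Lemmas \ref{lemma1}--\ref{lemma2}. The only small point worth double-checking is that the differentiations implicit in the iterated application of $\mathcal H+\alpha$ are justified, which is precisely why the hypothesis is strengthened from $C^1$ to $C^{k+1}$; with this regularity each $u_j$ lies in $C^{k+1-j}(\SS D)$ on compacts in the $x$-variable, so that $(\mathcal H+\alpha)^{k+1}u_k$ is well-defined and continuous. For the tensor-field statement \eqref{L_eq_RP=TF-m}, it suffices to observe that substituting $f(x,\xi)=\langle w(x),\xi^m\rangle$ into \eqref{L_eq_RP=mu} (or equivalently combining Corollary \ref{corol1} with the chain of Lemma \ref{lemma1} applications above) reproduces the right-hand side $\langle w(x),\xi^m\rangle$, completing the proof.
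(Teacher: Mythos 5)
Your proof is correct and follows essentially the same route as the paper: iterate Lemma \ref{lemma1} to reduce $u_k$ to $u_0$, then apply Lemma \ref{lemma2}, using the closed form $\L_{k+1}=\frac{1}{k!}(\mathcal H+\alpha)^{k+1}$. In fact you are slightly more careful than the paper, which states $(\mathcal H+\alpha)^k u_k=u_0$ where it should read $(\mathcal H+\alpha)^k u_k=k!\,u_0$ (the factorial is then absorbed by the normalization in $\L_{k+1}$, exactly as you track explicitly).
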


 \begin{proof} Under the assumptions of lemma \ref{lemma1},
we apply on the both sides of (\ref{H_eq_1}) $(k-1)$ times
the operator $({\mathcal H} + \alpha)$. This gives
$({\mathcal H} + \alpha)^k u_{k}(x,\xi) = u_{0}(x,\xi)$. 
Applying the operator $({\mathcal H} + \alpha)$ once more and using lemma \ref{lemma2}, we get
the first statement (\ref{L_eq_RP=mu}) of the theorem.
Applying these arguments to $f(x,\xi) = \langle w(x),
\xi^m \rangle$ we get formula (\ref{L_eq_RP=TF-m}).
 \end{proof}

Thus we have differential
relations (\ref{L_eq_RP=mu}), (\ref{L_eq_RP=TF-m}),
connecting ART $u_{k}(x,\xi)$ of order $k$ with the
function $f(x,\xi)$ or symmetric $m$-tensor field $w(x)$.

We advance to the non-stationary case.

\medskip

 \begin{lemma}\label{lemma3} For functions
$\varepsilon(x,\xi) \geq 0$, $\rho(x,\xi)$ $\in$ $C^1(\SS
D)$, $f(t,x,\xi)$ $\in$ $C^1(R \times \SS D)$. Then for
$u_{k}(t,x,\xi)$ determined by
\eqref{TF_k-order-t}, $k \geq 0$, the
equalities
 \begin{equation}\label{H_eq_1-t}
  \hspace*{4mm} \Big( \frac{\partial}{\partial t}
  + {\mathcal H} + \alpha \Big)
  u_{k} = k\, u_{k-1}, \quad k \geq 1\,,
 \end{equation}
 \begin{equation}\label{H_eq_RP=mu-t}
  \hspace*{-4mm}
  \Big( \frac{\partial}{\partial t}
  + {\mathcal H} + \alpha \Big)
  u_{0} = f 
 \end{equation}
are valid. In particular, if $f(t,x,\xi) = \langle
w(t,x), \xi^m \rangle$ for a symmetric $m$-tensor field
$w(t,x) \in C^1(\RR \times S^m)$, then the
equality
 \begin{equation}\label{H_eq_RP=TF-m-t}
  \Big( \frac{\partial}{\partial t}
  + {\mathcal H} + \alpha \Big)
  u_{0} =
  \big\langle w , \xi^m \big\rangle
 \end{equation}
holds true.
 \end{lemma}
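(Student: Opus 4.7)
The plan is to imitate the proofs of Lemmas \ref{lemma1} and \ref{lemma2} with the extra time variable carried along. The only genuinely new feature of \eqref{TF_k-order-t} is that $f$ is evaluated at the retarded time $t-s$; the exponential weight does not involve $t$, so $\partial/\partial t$ acts only on the $f$-factor of $u_k$. The crucial observation is the chain-rule identity
\[
  \Bigl( \frac{\partial}{\partial t} + \mathcal{H} \Bigr) g(t-s,x-s\xi)
  = -\frac{d}{ds}\, g(t-s,x-s\xi),
\]
valid for any differentiable $g$ and any fixed $t,x,\xi$. This is the direct analogue, now including the time variable, of the trick $(d/d\tau)\varphi(\tau-s) = -(d/ds)\varphi(\tau-s)$ used in the proof of Lemma \ref{lemma1}.

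First I would compute $(\partial/\partial t + \mathcal{H})u_k$ by differentiating under the integral. The contribution of $\mathcal{H}$ applied to the exponential factor is identical to the corresponding contribution in Lemma \ref{lemma1}, since $\alpha$ is independent of $t$. The combined $(\partial/\partial t + \mathcal{H})$-derivative of the $f$-factor equals $-(d/ds)\,f(t-s,x-s\xi,\xi)$ by the identity above, so integrating by parts in $s$ produces the same three auxiliary terms that appear in \eqref{Diff_Hu_m}: the boundary contribution $-\bigl[s^k \exp(\cdot)\,f(t-s,x-s\xi,\xi)\bigr]_{s=0}^{s=\infty}$, the integral $k\,u_{k-1}(t,x,\xi)$, and a third term which, together with the exponential-derivative contribution, collapses to $-\alpha(x,\xi)\,u_k(t,x,\xi)$ exactly as in the stationary case. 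For $k \geq 1$ the boundary term vanishes by the factor $s^k$ at $s=0$ and by the compact $x$-support of $f$ at $s=\infty$, yielding \eqref{H_eq_1-t}; for $k=0$ it contributes $f(t,x,\xi)$, yielding \eqref{H_eq_RP=mu-t}.

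The tensor identity \eqref{H_eq_RP=TF-m-t} then follows from \eqref{H_eq_RP=mu-t} applied componentwise to each $w_{i_1\ldots i_m}(t,x)$ and from the linearity of the inner product in its tensor argument, in exactly the same way as in the proof of Corollary \ref{corol1}. I do not expect any real analytic obstacle beyond careful bookkeeping of the additional time argument; the regularity and decay needed to justify differentiation under the integral and the integration by parts in $s$ are inherited verbatim from the stationary case treated in Lemmas \ref{lemma1} and \ref{lemma2}.
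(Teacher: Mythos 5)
Your proposal is correct and follows essentially the same route as the paper: the paper's proof likewise reduces everything to the identity $\partial f/\partial t = -\mathcal{H}f - df/ds$ for the retarded argument (your chain-rule observation in the form $(\partial/\partial t + \mathcal{H})f(t-s,x-s\xi,\xi) = -(d/ds)f(t-s,x-s\xi,\xi)$), then integrates by parts in $s$ and reuses the bookkeeping of Lemmas \ref{lemma1} and \ref{lemma2}, with the tensor case handled by linearity as in Corollary \ref{corol1}.
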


 \begin{proof}
Fixing non-stationary ART
$u_k(t,x,\xi)$ of order $k$ defined by (\ref{TF_k-order-t}),
we find $\displaystyle{ \frac{\partial
u_{k}}{\partial t}(t,x,\xi)}$. Since only function $f$
depends on $t$, we have to compute $\displaystyle{
\frac{\partial f(t-s,x-s\xi,\xi)} {\partial t} }$,
\begin{equation}\label{um-t-deriv}
  \frac{\partial u_{k}}{\partial t}
  = \int\limits^{\infty}_0
  s^k \exp \left( -\int^s_0
  \alpha(x-\sigma \xi,\xi) d \sigma \right)
  \frac{\partial f (t-s,x-s\xi,\xi)}{\partial t} d s.
\end{equation}
We calculate the total derivative $\displaystyle{ \frac{d
f}{d s} }$ at first,
$$
  \frac {d f }{d s} = \left\langle
  \frac{\partial f(t-s, x-s\xi,\xi)}
  {\partial (x-s\xi)},\;
  \frac{\partial (x-s\xi)}{\partial s} \right\rangle
  + \frac{\partial f(t-s, x-s\xi,\xi)}{\partial (t-s)}
  \,\frac{\partial (t-s)}{\partial s}.
$$
Using
$$
\,\displaystyle{ \frac{\partial f(\theta,y,\xi)}
  {\partial y}} =
  \nabla_{y} f(\theta,y,\xi)
$$
for $y = x-s\xi$, $\theta=t-s$, and
$\,\displaystyle{\frac{\partial y}{\partial s}=-\xi}$,
the derivative is represented as
$$
  \frac {d f }{d s} = \left\langle
  \nabla_{y} f(\theta,y,\xi)\,, -\xi \right\rangle
  + \frac{\partial f(\theta,y,\xi)}{\partial \theta}
  \,\frac{\partial \theta}{\partial s}.
$$
The next step consists in calculation the result of ${\mathcal H}$ action on $f$,
$$
  {\mathcal H} f \equiv
  \displaystyle{ \frac{d f(\theta,y+\tau\xi,\xi)}
  {d \tau} \Big|_{\tau=0} }
  =  \displaystyle{
  \left\langle \frac{\partial f(\theta,y+\tau\xi,\xi)}
  {\partial (y+\tau \xi)}\,,\,
  \frac{\partial (y+\tau\xi)}
  {\partial \tau} \right\rangle \Big|_{\tau=0}}
   =  \displaystyle{ \left\langle
  \nabla_{y} f(\theta,y,\xi)\,, \xi \right\rangle }.
$$
Because of $\displaystyle{ \frac{\partial \theta}{\partial s} }$
$=$ $\displaystyle{ - \frac{\partial \theta}{\partial t}
}$ we obtain
$$
   \displaystyle{ \frac{\partial f(\theta,y,\xi)}
   {\partial t} } 
   = \displaystyle{ -\frac{d f(\theta,y+\tau\xi,\xi)}
   {d \tau}
   \Big|_{\tau=0}
   - \frac{d f(\theta,y,\xi)}{d s}.}
$$
This implies
$$
 \begin{array}{rcl}
  \displaystyle{ \frac{\partial u_{k}}{\partial t}(t,x,\xi) }
  & = & \displaystyle{ - \hspace*{-1mm}
  \int\limits^{\infty}_0 \hspace*{-1mm} s^k
  \exp \left( - \int^{s}_0
  \alpha(x-\sigma\xi, \xi) d \sigma \right)
  \frac{d f(t-s, x+(\tau-s)\xi,\xi)}{d \tau}
  \Big|_{\tau=0} d s }
  \vspace*{2mm} \\[4ex]
 & & {} -\displaystyle{ \hspace*{-1mm}
  \int\limits^{\infty}_0 \hspace*{-1mm} s^k
  \exp \left( - \int^s_0 \alpha(x-\sigma \xi,\xi)
  d \sigma \right)
   \frac{d f(t-s, x-s\xi,\xi)}{d s} \, d s.}
 \end{array}
$$
Based on the definition of operator ${\mathcal H}$ we
integrate the second term of the right-hand side of the
last expression and obtain
$$
 \begin{array}{rcl}
  \displaystyle{ \Big( \frac{\partial u_{k}}{\partial t} }
   \displaystyle{ +{\mathcal H} u_{k} \Big)(t,x,\xi) }
 & = & \displaystyle{ \int\limits^{\infty}_0 s^k
   \left.\frac{d}{d \tau}
  \exp \left( - \int^{s}_0
  \alpha(x + (\tau -\sigma)\xi,\xi) d \sigma \right)
  \right|_{\tau=0} }
  \,f(t-s, x-s\xi, \xi) d s \\[4ex]
&& {} -  \displaystyle{ \,s^k \exp
  \left( -
   \int^s_0 \alpha(x-\sigma \xi,\xi)
   d \sigma \right) f(t-s, x-s\xi, \xi)
   \Big|^{\infty }_{s=0} } \\[4ex]
& & {}  + \displaystyle{
  \int\limits^{\infty}_0
  f(t-s, x-s\xi, \xi) \frac{d}{d s}
  \left[ s^k \exp
  \left( 
  - \int^s_0
  \alpha(x-\sigma \xi,\xi) d \sigma \right)
  \right]ds.}
 \end{array}
$$
We refer to the reasonings and calculations similar
to those in the proofs of lemmas \ref{lemma1} and
\ref{lemma2}. As before, taking into account
the different results depending on $k \geq 1$ or
$k=0$, we obtain
(\ref{H_eq_1-t}), (\ref{H_eq_RP=mu-t}) of lemma
\ref{lemma3} for non-stationary ART $u_k$
over the function $f(t,x,\xi)$. In partial case
$f(t,x,\xi) = \langle w(t,x),\xi^m \rangle$, where
$w(t,x)$ is a symmetric $m$-tensor field, the
equality (\ref{H_eq_RP=TF-m-t}) is verified instead of
(\ref{H_eq_RP=mu-t}). As it was before the linearity
property (\ref{Lin_Prop_ART}) has been applied.
 \end{proof}

\medskip

We define the operator ${\L}^t_k : C^k(R \times \SS D)
\rightarrow C(R \times \SS D)$ by mathematical induction
on $m$,
$$
 \begin{array}{rcl}
  \displaystyle{ {\L}^t_1 \psi (t,x,\xi) }
  &
  = 
  & \displaystyle{ \Big(
  \frac {\partial}{\partial t} + {\mathcal H} + \alpha
  \Big) \psi (t,x,\xi) }, \vspace*{3mm} \\
  {\L}^t_{k} \psi (t,x,\xi) 
  &
  = 
  & \displaystyle{ \Big( \frac{\partial}
  {\partial t} + {\mathcal H} + \alpha \Big)
  {\L}^t_{k-1} \psi (t,x,\xi) }, \quad k > 1.
 \end{array}
$$

\begin{theorem}\label{theorem2} For $\varepsilon(x,\xi)
\geq 0$, $\varepsilon(x,\xi), \rho(x,\xi)$ $\in$
$C^{k+1}(\SS D)$, $f(t,x,\xi) \in C^{k+1}(R \times \SS
D)$, the function $u_k(t,x,\xi$ defined by
(\ref{TF_k-order-t}) is a solution of the equation
 \begin{equation}\label{L_eq_RP=mu-t}
  ({\L}^t_{k+1}u_{k})(t,x,\xi)
   = f(t,x,\xi).
 \end{equation}
In particular, the function defined by (\ref{TF_k-order-t})
$u_k(t,x,\xi$ containing $f(t,x,\xi) = \langle w(t,x) ,
\xi^m \rangle$, where $w$ is symmetric $m$-tensor field,
is a solution of the equation
 \begin{equation}\label{L_eq_RP=TF-m-t}
   ({\L}^t_{k+1}u_{k})(t,x,\xi)
   = \big\langle w(t,x) , \xi^m \big\rangle.
 \end{equation}
 \end{theorem}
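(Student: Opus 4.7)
The plan is to mimic the proof of Theorem~\ref{theorem1} line by line, substituting Lemma~\ref{lemma3} for Lemmas~\ref{lemma1} and \ref{lemma2}. The central point is that Lemma~\ref{lemma3} provides the same recursive structure $(\partial_t + {\mathcal H} + \alpha) u_k = k\, u_{k-1}$ as in the stationary case; consequently the iteration argument transfers verbatim, with the single operator $({\mathcal H} + \alpha)$ replaced by $\bigl( \frac{\partial}{\partial t} + {\mathcal H} + \alpha \bigr)$ throughout. The regularity assumption $f \in C^{k+1}(R \times \SS D)$ together with $\varepsilon, \rho \in C^{k+1}(\SS D)$ is exactly what is needed to justify applying this first-order operator $k{+}1$ times to $u_k$, since each application lowers the smoothness of the integrand by one order and produces a well-defined function.

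First I would start from the identity \eqref{H_eq_1-t} at order $k$ and apply $\bigl( \frac{\partial}{\partial t} + {\mathcal H} + \alpha \bigr)$ to both sides. Using \eqref{H_eq_1-t} once more, now at order $k-1$, gives
\[
  \Bigl( \frac{\partial}{\partial t} + {\mathcal H} + \alpha \Bigr)^{2} u_k = k(k-1)\, u_{k-2}.
\]
A straightforward induction on the number of applied operators then yields
\[
  \Bigl( \frac{\partial}{\partial t} + {\mathcal H} + \alpha \Bigr)^{k} u_k = k!\, u_0.
\]
Applying the operator one final time and invoking \eqref{H_eq_RP=mu-t} produces
\[
  \Bigl( \frac{\partial}{\partial t} + {\mathcal H} + \alpha \Bigr)^{k+1} u_k = k!\, f(t,x,\xi).
\]
Unfolding the recursive definition of ${\L}^t_{k+1}$ (with the same normalization convention as for $\L_{k+1}$ in the stationary case), this identifies the left-hand side with $k!\, {\L}^t_{k+1} u_k$, and \eqref{L_eq_RP=mu-t} follows.

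For the tensor-field statement \eqref{L_eq_RP=TF-m-t} I would not redo the calculation, but instead appeal to the linearity argument already used in Corollary~\ref{corol1}: each component $w_{i_1\ldots i_m}(t,x)$ may be substituted for $f(t,x,\xi)$ in \eqref{L_eq_RP=mu-t}, and contracting with $\xi^{i_1}\cdots\xi^{i_m}$ inside and outside the integral, as in \eqref{Lin_Prop_ART}, yields the desired equation. The main (minor) obstacle is bookkeeping: verifying that at each step of the iteration the intermediate function $u_{k-j}$ has enough regularity and the exchange of $\partial_t$ with the defining integral \eqref{TF_k-order-t} is legitimate, both of which follow from the $C^{k+1}$ hypothesis and the compact support of $f$ in $x$. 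No new analytic difficulty arises beyond what was already handled in Lemma~\ref{lemma3} and Theorem~\ref{theorem1}.
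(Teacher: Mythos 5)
Your proof is correct and is essentially the paper's own argument: the authors' entire proof of this theorem reads that it ``is based on lemma~\ref{lemma3} and follows the lines of the proof of theorem~\ref{theorem1}'', which is precisely the iteration of $\bigl(\frac{\partial}{\partial t}+{\mathcal H}+\alpha\bigr)$ via \eqref{H_eq_1-t} and \eqref{H_eq_RP=mu-t} that you carry out. Your parenthetical about normalization is well spotted: the paper's displayed definition of $\L^t_k$ omits the factor $\frac{1}{k-1}$ that appears in the stationary $\L_k$, so taken literally it would give $\L^t_{k+1}u_k = k!\,f$ rather than $f$; reading the definition with the same normalization as in the stationary case, as you do, is what makes the stated theorem hold.
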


 \begin{proof} The proof is based
on lemma \ref{lemma3} and follows the lines of the proof of theorem
\ref{theorem1}.
 \end{proof}


\section{Uniqueness theorems}

We prove uniqueness theorems for boundary-value and
initial boundary-value problems of the equations
(\ref{L_eq_RP=mu}), (\ref{L_eq_RP=mu-t}), respectively.
We remind that $D\subset \RR^3$ is a bounded convex domain
with smooth boundary $\partial D$.

\medskip

 \begin{theorem}\label{theorem3} Let for
$\varepsilon(x,\xi), \rho(x,\xi) \in C^k( \SS D)$,
$\varepsilon(x,\xi) \geq 0$, a function $\varphi(x,\xi)
\in C^{k+1}(\SS D)$ be a solution of the equation
 \begin{equation}\label{L_eq_RP=mu_uniq}
  \frac{1}{k!} \big( {\mathcal H}
  + \alpha \big)^{k+1} \varphi
  = 0
 \end{equation}
and satisfies the boundary-value conditions
 \begin{equation}\label{Boun_Val_Stat}
  \varphi(x,\xi) = ({\mathcal H} \varphi)(x,\xi)
  = \dots = ({\mathcal H}^k \varphi)(x,\xi) = 0,
  \quad x \in \partial D, \quad  \langle n_x,
  \xi \rangle < 0,
 \end{equation}
where $n_x$ is the outer normal to the surface $\partial D$
at a point $x$. Then $\varphi(x,\xi) \equiv 0$ for
$(x,\xi) \in \SS D$.
 \end{theorem}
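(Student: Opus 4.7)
My plan is to proceed by induction on $k$, using the fact that the operator $\mathcal H+\alpha$ is a first order transport operator along the straight lines $\{x+\tau\xi\}$, so characteristic integration reduces a first order equation on $\SS D$ to an ODE along each ray.

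For the base case $k=0$, I have $(\mathcal H+\alpha)\varphi=0$ with $\varphi=0$ on the incoming part $\{(x,\xi):x\in\partial D,\ \langle n_x,\xi\rangle<0\}$. For fixed $(x,\xi)\in\SS D$ and $x\notin\partial D$, I would follow the ray backwards: since $D$ is bounded and convex, there is a unique $s_\ast>0$ with $x_\ast:=x-s_\ast\xi\in\partial D$ and $\langle n_{x_\ast},\xi\rangle<0$. Setting $g(s):=\varphi(x-s\xi,\xi)$ gives $g'(s)=-(\mathcal H\varphi)(x-s\xi,\xi)=\alpha(x-s\xi,\xi)\,g(s)$, a linear scalar ODE with terminal condition $g(s_\ast)=\varphi(x_\ast,\xi)=0$; by uniqueness $g\equiv0$, so $\varphi(x,\xi)=g(0)=0$.

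For the inductive step, assume the statement holds up to order $k$. Given $\varphi\in C^{k+2}(\SS D)$ with $(\mathcal H+\alpha)^{k+1}\varphi=0$ and $\mathcal H^j\varphi=0$ on the incoming boundary for $j=0,1,\dots,k$, I set $\psi:=(\mathcal H+\alpha)\varphi$. Then $(\mathcal H+\alpha)^{k}\psi=0$, so it suffices to check that $\mathcal H^j\psi=0$ on the incoming boundary for $j=0,1,\dots,k-1$. Since $\mathcal H$ is a derivation (it satisfies $\mathcal H(\alpha\varphi)=(\mathcal H\alpha)\varphi+\alpha\,\mathcal H\varphi$), iterating the Leibniz rule gives, for every $j\ge 0$,
\[
 \mathcal H^{j}\psi \;=\; \mathcal H^{j+1}\varphi \;+\; \sum_{i=0}^{j}\binom{j}{i}(\mathcal H^{j-i}\alpha)(\mathcal H^{i}\varphi).
\]
For $j\le k-1$ all the terms on the right involve $\mathcal H^{i}\varphi$ with $i\le j+1\le k$, which vanish on the incoming boundary by hypothesis. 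The inductive hypothesis then forces $\psi\equiv 0$ on $\SS D$, i.e.\ $(\mathcal H+\alpha)\varphi=0$, and the base case applied to $\varphi$ (which still satisfies $\varphi=0$ on the incoming boundary) yields $\varphi\equiv 0$.

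The main obstacle I anticipate is the bookkeeping in the Leibniz identity above and the need for enough regularity of $\alpha$ to make the repeated differentiation meaningful; the hypothesis $\alpha\in C^{k}(\SS D)$ is precisely what is required so that each $\mathcal H^{j-i}\alpha$ is continuous up to $\partial D$. Apart from that, the argument is essentially a clean reduction of an $(\mathcal H+\alpha)^{k+1}$ boundary value problem to $k+1$ applications of the standard transport uniqueness, so no further analytical tools beyond characteristic integration are needed.
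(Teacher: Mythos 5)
Your proof is correct, but it is organized quite differently from the paper's. The paper handles the first-order case by an energy identity: it multiplies $({\mathcal H}+\alpha)\varphi$ by $\bar\varphi$, takes the real part to get $\tfrac12{\mathcal H}(|\varphi|^2)+\varepsilon|\varphi|^2=0$, integrates over $\SS D$ and applies the Gauss--Ostrogradsky formula, so that the incoming boundary condition kills the boundary term; for the induction it peels the operator from the \emph{outside}, applying this same energy identity to $\Psi={\L}_k\varphi$, which satisfies a first-order equation, and then invokes the induction hypothesis on ${\L}_k\varphi=0$. You instead integrate along characteristics (reducing $({\mathcal H}+\alpha)\varphi=0$ to a linear ODE along each ray with zero datum at the entry point) and peel from the \emph{inside}, showing $\psi=({\mathcal H}+\alpha)\varphi$ solves the order-$k$ problem with the lower-order boundary conditions. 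Each route has something to recommend it. Your explicit Leibniz expansion of ${\mathcal H}^j\psi$ makes rigorous a trace verification that the paper only sketches (``the term contains powers ${\mathcal H}^j\varphi$\,\dots''), and your base case is pointwise and never uses $\varepsilon\ge 0$; in fact it closes a small gap in the paper's argument, since when $\varepsilon$ vanishes on part of $D$ the identity \eqref{Form_Gau-Ostr} only forces $\varphi$ to vanish on the outgoing boundary and on $\{\varepsilon>0\}$, not throughout $\SS D$. The energy method, on the other hand, transfers almost verbatim to the non-stationary problem of Theorem~\ref{theorem4} and to settings where characteristics are not straight lines. One cosmetic remark: in your inductive step the regularity should read $\varphi\in C^{k+1}(\SS D)$ (matching the theorem with parameter $k$), so that $\psi=({\mathcal H}+\alpha)\varphi\in C^{k}(\SS D)$ is exactly what the statement at parameter $k-1$ requires; the $C^{k+2}$ you wrote is an off-by-one slip that does not affect the argument.
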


 \begin{proof} We prove the statement for $k=0$ at first.
For this case (\ref{L_eq_RP=mu_uniq}) reads as
$({\mathcal H} + \alpha) \varphi = 0$. Since the coefficient
$\alpha(x,\xi) = \varepsilon(x,\xi) + \i \rho(x,\xi)$ in
(\ref{Boun_Val_Stat}) is complex-valued, the
function $\varphi(x,\xi)$ is complex-valued too, and thus
it can be represented in a form $\varphi = \varphi_1 + \i
\varphi_2$. We write $({\mathcal H} + \alpha) \varphi$
in more details,
$$
  ({\mathcal H} + \varepsilon + \i \rho)
  (\varphi_1 + \i \varphi_2)
  = ({\mathcal H} \varphi_1 + \varepsilon \varphi_1
  - \rho \varphi_2)
  + \i ({\mathcal H} \varphi_2 + \varepsilon \varphi_2
  + \rho\varphi_1) = 0,
$$
and multiply the obtained formula with $\bar{\varphi} =
\varphi_1 - \i \varphi_2$. Here, the notations
$\bar{\varphi}$ for complex conjugate and $|\varphi|$ for
modulus of complex-valued function $\varphi$ are used.
Then,
$$
  Re \left\{ \bar{\varphi}({\mathcal H} + \bar{\alpha})
  \varphi \right\}
  = \frac{1}{2} {\mathcal H} (|\varphi|^2)
  + \varepsilon |\varphi|^2 = 0.
$$
After integration of the last equation over $D$ and the unit
sphere $\SS^2_x$, we get
$$
  \frac{1}{2} \int\limits_D \int\limits_{\SS^2_x}
  {\mathcal H} (|\varphi|^2) d x d \lambda_x(\xi)
  + \int\limits_D \int\limits_{\SS^2_x}
  \varepsilon |\varphi|^2 d x d\lambda_x(\xi) = 0,
$$
where $d \lambda_x(\xi)$ is the angular measure on $\SS^2_x$,
$x \in D$.

Because of ${\mathcal H} (|\varphi|^2) = {\rm div}(|\varphi|^2
\xi)$ (see (\ref{H_coord})) the Gauss-Ostrogradsky
formula can be applied to the first integral of last
expression. In this way we obtain
 \begin{equation}\label{Form_Gau-Ostr}
  \frac{1}{2} \int\limits_{\partial D}
  \int\limits_{\SS^2_x}
  \langle n_x,\xi \rangle |\varphi|^2 d s d\lambda_x(\xi)
  + \int\limits_D \int\limits_{\SS^2_x} \varepsilon
  |\varphi|^2 d \lambda_x(\xi) d x = 0.
 \end{equation}
The condition (\ref{Boun_Val_Stat}) for $k=0$, implies that
$\varphi(x,\xi)$ vanishes at $\langle n_x, \xi \rangle <
0$, $x\in \partial D$. Hence, the first integral at the
left-hand side of (\ref{Form_Gau-Ostr}) is equal to zero.
From this and the non-negativity of $\varepsilon(x, \xi)$
it follows that (\ref{Form_Gau-Ostr}) is valid if and
only if $\varphi(x,\xi) = 0$ for $(x,\xi) \in \SS D$. This yields 
the statement for $k=0$.

Assume further that the theorem holds true for some $j=k-1$, $j
\geq 1$. We prove it for $j=k$ (i.e. for the equation
of order $k+1$). To this end we consider the equation
$$
  {\L}_{k+1} \varphi = \frac {1}{k!}
  ({\mathcal H} + \alpha)^{k+1} \varphi
  = \frac{1}{k} ({\mathcal H}
  + \alpha)({\L}_k \varphi) = 0,
$$
and denote ${\L}_k \varphi $ as $A + \i B$. Then,
$$
  \frac{1}{k} ({\mathcal H} + \alpha) (A + \i B)
  = \frac{1}{k}
  ({\mathcal H} A + \varepsilon A - \rho B)
  + \frac{\i}{k}({\mathcal H} B + \varepsilon B
  + \rho A) = 0.
$$
We multiply the obtained expression with
$\overline{{\L}_k \varphi}$, the complex conjugate of ${\L}_k \varphi$. The
multiplication is possible by the induction
assumption, so
$$
 \begin{array}{rcl}
  \displaystyle{ (A - \i B) \frac{1}{k}({\mathcal H}
  + \alpha)
  (A + \i B) } \vspace*{2mm}
 & = & \displaystyle{ \frac{1}{k} \big( A {\mathcal H} A
  + B {\mathcal H} B
  + \varepsilon A^2
  + \varepsilon B^2 \big) } \\
  \vspace*{2mm} 
 & &
   \displaystyle{{}+ \frac{\i}{k} \big( A {\mathcal H} B
  - B {\mathcal H} A
  + \rho A^2 + \rho B^2 \big) = 0.}
 \end{array}
$$
Hence
\begin{align*}
&  \displaystyle{ Re \big\{ \big( \overline{{\L}_k \varphi}
  \big)\frac{1}{k}
  ({\mathcal H} + \alpha)({{\L}_k \varphi}) \big\} }
  = \displaystyle{ \frac{1}{2k} {\mathcal H} (A^2+B^2)
  + \frac{1}{k} \varepsilon (A^2+B^2) }
  \vspace*{2mm} \\
&  = \displaystyle{ \frac{1}{2k} {\mathcal H}
  \Big( \big| {\L}_k \varphi \big|^2
  \Big) + \frac{1}{k} \varepsilon \big| {\L}_k
  \varphi \big|^2 = 0. }
\end{align*}
After integrating the last expression over $D$ and
$\SS^2_x$, $x \in D$, and applying the
Gauss-Ostrogradsky formula we obtain an expression as
(\ref{Form_Gau-Ostr}), where, instead of $|\varphi|^2$, the
term $|{\L}_j \varphi|^2$ appears. The term contains
powers ${\mathcal H}^j \varphi$ of the operator
${\mathcal H}$ with $j\leq k$, so at $x \in \partial
D$, $\langle n_x, \xi \rangle < 0$, it follows that
${\L}_k \varphi = 0$. We can conclude, as in case
$k=0$, that $\varphi(x,\xi) = 0$ for $(x,\xi) \in \SS D$.
The proof of the theorem is complete.
 \end{proof}

\medskip

 \begin{theorem}\label{theorem4} Let for the functions
$\varepsilon(x,\xi), \rho(x,\xi)$ $\in$ $C^k(\SS D)$,
$\varepsilon(x,\xi) \geq 0$, a function
$\varphi(t,x,\xi)$ $\in$ $C^{k+1} (R \times S D)$ be a
solution of
the equation
 \begin{equation}\label{L_eq_RP=mu-t_uniq}
  \frac{1}{k!} \Big( \frac{\partial}{\partial t}
  + {\mathcal H} + \alpha \Big)^{k+1} \varphi
  = 0,
 \end{equation}
that satisfies the initial conditions
 \begin{equation}\label{Init_Val-t}
  \varphi(0,x,\xi) = \frac{\partial \varphi}
  {\partial t}(0,x,\xi)
  = \ldots = \frac{\partial^k \varphi}
  {\partial t^{k}}(0,x,\xi)
  = 0,
 \end{equation}
and, for $x \in \partial D$, $\langle n_x,\xi \rangle <
0$, $t \geq 0$, the boundary conditions
 \begin{equation}\label{Boun_Val-t}
 \hspace*{-2mm} \varphi(t,x,\xi)
  = ({\mathcal H} \varphi)(t,x,\xi)
  = \ldots
  = ({\mathcal H}^k \varphi )(t,x,\xi) = 0,
 \end{equation}
where $n_x$ is outer normal to the surface $\partial D$
at a point $x$. Then $\varphi(t,x,\xi) = 0$ for $t
> 0$, $(x,\xi) \in \SS D$.
 \end{theorem}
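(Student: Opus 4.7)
The plan is to adapt the inductive scheme used in Theorem~\ref{theorem3}, adding a time integration to the standard spatial energy estimate.

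For the base case $k=0$, the equation reads $(\partial_t+{\mathcal H}+\alpha)\varphi=0$. Multiplying by $\bar\varphi$ and taking the real part gives
$$
\frac{1}{2}\,\partial_t|\varphi|^2 + \frac{1}{2}\,{\mathcal H}(|\varphi|^2) + \varepsilon|\varphi|^2 = 0,
$$
with the imaginary $\rho$-contribution dropping out as in the stationary case. I will integrate over $[0,T]\times D\times\SS^2_x$ for arbitrary $T>0$. Using ${\mathcal H}(|\varphi|^2)={\rm div}(|\varphi|^2\xi)$ together with the Gauss-Ostrogradsky formula reduces the spatial term to a surface integral on $\partial D$; splitting $\partial D$ into the inflow set $\{\langle n_x,\xi\rangle<0\}$, where the boundary condition forces $\varphi=0$, and its outflow complement produces a non-negative surface contribution. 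The time integral of $\partial_t|\varphi|^2$ yields $\int_D\int_{\SS^2_x}|\varphi(T,x,\xi)|^2\,dx\,d\lambda_x(\xi)$ since $\varphi(0,\cdot,\cdot)=0$. The resulting identity is a sum of three non-negative quantities equal to zero, so each vanishes; in particular $\varphi(T,\cdot,\cdot)\equiv 0$, and since $T>0$ was arbitrary, $\varphi\equiv 0$ on $(0,\infty)\times\SS D$.

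For the inductive step, assuming the statement at order $k-1$, I factor the equation as $\frac{1}{k}(\partial_t+{\mathcal H}+\alpha){\L}^t_k\varphi=0$ and apply the base case to $\psi:={\L}^t_k\varphi$. The crucial preparation is to verify that $\psi$ meets the initial and inflow-boundary conditions required by the base case. Since $\alpha(x,\xi)$ is independent of $t$, the operator $\partial_t$ commutes with both ${\mathcal H}$ and multiplication by $\alpha$, giving the binomial expansion $(\partial_t+{\mathcal H}+\alpha)^k=\sum_{j=0}^k\binom{k}{j}\partial_t^j({\mathcal H}+\alpha)^{k-j}$. Evaluating at $t=0$ and invoking $\partial_t^j\varphi(0,\cdot,\cdot)=0$ for $j\le k$ yields $\psi(0,\cdot,\cdot)=0$. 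On the inflow boundary each factor $({\mathcal H}+\alpha)^{k-j}\varphi$ can be expanded into a finite sum of terms of the form $P\cdot{\mathcal H}^l\varphi$ with $l\le k-j\le k$, where $P$ is a polynomial in spatial derivatives of $\alpha$; every such term vanishes by \eqref{Boun_Val-t}, and applying $\partial_t^j$ preserves this because time differentiation commutes with the spatial operators and with evaluation at $\partial D$. Once $\psi\equiv 0$ is established, the identity ${\L}^t_k\varphi\equiv 0$ is an equation of order $k$ for $\varphi$ to which the inductive hypothesis applies, yielding $\varphi\equiv 0$.

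The main obstacle is the inflow-boundary verification for ${\L}^t_k\varphi$, because the non-commutativity of ${\mathcal H}$ and $\alpha$ introduces spatial derivatives of $\alpha$ that must be handled. The key observation is that \eqref{Boun_Val-t} kills not only $\varphi$ but every ${\mathcal H}^l\varphi$ with $l\le k$ on the inflow set, so once the expansion of $({\mathcal H}+\alpha)^{k-j}$ is organised so that each summand carries a factor ${\mathcal H}^l\varphi$ with $l\le k$, the vanishing is automatic; the time derivatives commute through the spatial operators without creating any new difficulty.
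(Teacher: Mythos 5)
Your proof is correct and follows essentially the same route as the paper: the identical energy identity integrated over $[0,T]\times D\times\SS^2_x$ with Gauss--Ostrogradsky for the base case, and induction by factoring out one copy of $\partial_t+{\mathcal H}+\alpha$ and applying the order-zero result to ${\L}^t_k\varphi$. The paper omits the inductive step entirely (deferring to Theorem~\ref{theorem3}); your explicit verification that ${\L}^t_k\varphi$ inherits the initial and inflow-boundary conditions, via the binomial expansion and the product-rule expansion of $({\mathcal H}+\alpha)^{k-j}$ into terms carrying ${\mathcal H}^l\varphi$ with $l\le k$, supplies exactly the details the paper leaves out.
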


 \begin{proof}
We prove the theorem for $k=0$, i.e., for
the equation of the first order.

Considering $\varphi = \varphi_1 + \i \varphi_2$ and
multiplying both parts of the equality
$$
  \Big( \frac{\partial}{\partial t}
  + {\mathcal H} + \alpha \Big)
  \big( \varphi_1 + \i \varphi_2 \big) = 0
$$
by $\overline{\varphi} = \varphi_1 - \i \varphi_2$, we
obtain
$$
  \frac{1}{2} \frac{\partial}{\partial t}
  |\varphi|^2
  + \frac{1}{2} {\mathcal H} \big( |\varphi|^2 \big)
  + \varepsilon |\varphi |^2 = 0.
$$
We integrate the resulting expression by $t$ from $0$ to $T
\in (0,\infty)$, domain $D$ and sphere $\SS^2_x$, then
use Gauss-Ostrogradsky formula and obtain
$$
 \hspace*{-16mm} \frac{1}{2} \int\limits_D
  \int\limits_{\SS^2_x}
  \Big( |\varphi(T,x,\xi)|^2
  - |\varphi(0,x,\xi)|^2 \Big)
  d \lambda_x(\xi) d x
$$
 \vspace*{-1mm}
$$
 \hspace*{8mm} +\, \frac{1}{2} \int\limits^T_0
  \int\limits_{\partial D}
  \int\limits_{\SS^2_x} \langle n_x,\xi \rangle
  |\varphi|^2
  d \lambda_x(\xi) d S d t
  + \int\limits^T_0 \int\limits_D
  \int\limits_{\SS^2_x}
  \varepsilon |\varphi|^2 d \lambda_x(\xi)
  d x d t = 0.
$$
Taking into account that $T$ is arbitrary, the initial conditions
(\ref{Init_Val-t}) and boundary conditions (\ref{Boun_Val-t})
(for $k=0$), we justify that the last
formula is correct only if $\varphi(t,x,\xi) = 0$ for $t
\in (0, \infty)$, $(x,\xi) \in \SS D$.

The remaining part of the proof is quite similar
to the proof of the second part of theorem~\ref{theorem3} and omitted here.
 \end{proof}


\section{Integral angular moments of generalized ART}

Let a function $f(x,\xi) \in \SS D$, $p \geq
0$ being an integer, and ART $u_k(x,\xi)$ of order $k$ (\ref{TF_k-order})
be given. We define a symmetric $p$-tensor field ${\cal E}_{kp}$ 
by
 \begin{equation}\label{Ang_p-moment}
  {\cal E}_{kp}^{i_1\dots i_p}u_k (x)
  = \int\limits_{\SS^2_x} u_k(x,\xi)
  \xi^{i_1} \dots \xi^{i_p}\,
  d \lambda_x(\xi), \quad p \geq 0.
 \end{equation}
Here, $\xi^{j}$ is the $j$-component of unit vector $\xi$,
$j=1,2,3;\,$ $d\lambda_x(\xi)$ is the angular measure
on the sphere $\SS^2_x$. In particular, for $p=0$, we have
 \begin{equation}\label{Ang_0-moment}
  {\cal E}_{k0}u_k (x)
  = \int\limits_{\SS^2_x}u_k(x,\xi)\,d\lambda_x(\xi).
 \end{equation}

The tensor field ${\cal E}_{kp}u_k$
defined by (\ref{Ang_p-moment}),
is called the {\it integral angular $p$-moments} of ART
of order $k$. For brevity we call them {\it angular moments}
below. The field ${\cal E}_{k0}u_k$ is scalar, and the
fields ${\cal E}_{kp}u_k$, for an integer $p \geq 1$, are
symmetric tensor fields of rank $p$. Subsequently we use the
notations ${\cal E}_{kp}$ for the tensor fields ${\cal
E}_{kp} u_k$.

In this section we establish certain differential
relations connecting the tensor fields ${\cal E}_{kp}$
between themselves and with function $f(x,\xi)$. The
operator of divergence, acting on symmetric tensor
fields, is defined by
\begin{equation}\label{divg}
 \displaystyle{
   \left( \delta\,w \right)_{j_{1}\ldots j_{m}}
   = \frac{\partial w_{j_{1} \ldots j_{m}p}}
   {\partial x^q}\,\delta^{pq},
 }
\end{equation}
where $w_{j_{1}\ldots j_{m}}$ is a tensor field $w$ of rank
$m$, $\,\delta^{jk}$ denotes the Kronecker symbol.

We 
compute $\delta\,{\cal E}_{kp}$ for $\,k \geq 1$, $\,p \geq 1$.
By definition we have
 \begin{equation}\label{div_Ang_moment}
  (\delta\, {\cal E}_{kp})^{i_1\dots i_{p-1}}(x)
  = \frac {\partial }{\partial x^j}\,
  {\cal E}_{kp}^{i_1
  \dots i_{p-1} j}(x) = \int\limits_{\SS^2_x}
  \frac{\partial u_k(x,\xi)}{\partial x^j}
  \xi^{i_1}
  \dots \xi^{i_{p-1}} \xi^j\, d\lambda_x(\xi).
 \end{equation}
By means of lemma \ref{lemma1} and (\ref{H_coord}) we
obtain
$$
  \xi^j \frac{\partial u_k}{\partial x^j}
  = {\mathcal H} u_k
  = k\,u_{k-1}-\alpha\,u_k,
$$
where $u_k$ is defined by (\ref{TF_k-order}),
and the operator ${\mathcal H}$ is defined by
(\ref{H_def}). Substituting the last formula into
(\ref{div_Ang_moment})
we obtain
 \begin{equation}\label{Res_div_Ang_mom}
  (\delta\, {\cal E}_{kp})^{j_1\dots j_{p-1}}(x)
  = k\,{\cal E}^{j_1 \dots j_{p-1}}_{(k-1)(p-1)}(x) 
  - \displaystyle{ \int\limits_{\SS^2_x} \alpha(x,\xi)
  u_k(x,\xi) \xi^{i_1}\dots \xi^{i_{p-1}}
  d\lambda_x(\xi). }
 \end{equation}
If the function $\alpha(x,\xi) = \alpha (x)$ does not
depend on vector $\xi$, the expression
(\ref{Res_div_Ang_mom})
can be written as
 \begin{equation}\label{Res_div_Ang_mom-1pc}
  \delta\, {\cal E}_{kp} = k\,{\cal E}_{(k-1)(p-1)}
  - \alpha\,{\cal E}_{k(p-1)},
  \quad k \geq 1,\quad  p \geq 1.
 \end{equation}
If the function $\alpha(x,\xi)$ is a homogeneous
polynomial with respect to the variables $\xi^j$ (i.e. $\alpha (x,\xi) =
Q^r_{j_1 \dots j_r}(x) \xi^{j_1} \dots \xi^{j_r}$, where
$Q^r$ is $r$-tensor field, $r \geq 1$), then 
(\ref{Res_div_Ang_mom})
has the form
 \begin{equation}\label{Res_div_Ang_mom-2pc}
  \delta\, {\cal E}_{kp}
  = k\,{\cal E}_{(k-1)(p-1)}
  - Q^r\ast{\cal E}_{k(p+r-1)},
 \end{equation}
where $Q^r \ast {\cal E}_{k(p+r-1)}$ is a convolution of
tensor fields $Q^r$ and ${\cal E}_{k(p+r-1)}$,
$$
  \left( Q^r \ast {\cal E}_{k(p+r-1)}
  \right)^{j_1 \dots j_{p-1}}
  = Q^r_{l_1 \dots l_r}{\cal E}^{l_1 \dots l_r j_1
  \dots j_{p-1}}_{k(p+r-1)}.
$$

For $k=0$ the relation (\ref{div_Ang_moment}) has a different
form. Indeed, applying the operator $\delta$ to
the field ${\cal E}_{0p}$ and using lemma \ref{lemma2},
we obtain $\big( {\mathcal H} + \alpha \big) u_0 = f $,
and then
 \begin{equation}\label{Res_div_Ang_mom0}
  (\delta\, {\cal E}_{0p})^{j_1 \dots j_{p-1}}(x)
  = f^{j_1 \dots j_{p-1}}_{p-1}(x)
  - \int\limits _{\SS^2_x} \alpha(x,\xi)\,
  u_0(x,\xi) \xi^{j_1} \dots \xi^{j_{p-1}}\,
  d \lambda_x(\xi),
 \end{equation}
where $f_{p-1}(x)$ means the angular $(p-1)$-moment of the
function $f$. The components of $f_{p-1}(x)$ are
 \begin{equation}\label{p-1_Ang_mom_mu}
  f_{p-1}^{j_1 \dots j_{p-1}}(x)
  = \int\limits _{\SS^2_x}
  f(x,\xi)\xi^{j_1} \dots
  \xi^{j_{p-1}}\,d \lambda_{x}(\xi).
 \end{equation}
For $\alpha(x,\xi) \equiv \alpha (x)$ the formula
(\ref{Res_div_Ang_mom0}) 
has the form
 \begin{equation}\label{div_p0_Ang_mom}
  \delta\, {\cal E}_{0p}
  = f_{p-1} - \alpha\, {\cal E}_{0(p-1)},
 \end{equation}
and for $\alpha(x,\xi) = Q^r_{j_1 \dots j_r}(x)\xi^{j_1}
\dots \xi^{j_r}$ it has the form
 \begin{equation}\label{div_p0_Ang_mom_pc2}
  \delta\, {\cal E}_{0p}
  = f_{p-1} - Q^r \ast {\cal E}_{0(p+r-1)}.
 \end{equation}

\begin{remark} 
Analogous relations to (\ref{Res_div_Ang_mom-2pc}),
(\ref{div_p0_Ang_mom_pc2}) can be derived for
the sum of polynomials which are homogeneous with respect
to the variables $\xi^j$, j=1,2,3. The coefficients of each
homogeneous polynomial are components of a symmetric
tensor field. The tensor fields involved in different terms
of the sum may have different ranks.\\[1ex]
\end{remark}

The formulas
(\ref{Res_div_Ang_mom})-(\ref{div_p0_Ang_mom_pc2})
can be used for deriving additional relations. Let 
$\alpha(x,\xi) = \varepsilon + \i \kappa$ being
constant, i.e. $\varepsilon = {\rm const}$,
$\kappa = {\rm const}$. Applying the operator $\delta$ to
(\ref{Res_div_Ang_mom})
we get
$$
  \delta\, (\delta\, {\cal E}_{kp})
  = k\, \delta\, {\cal E}_{(k-1)(p-1)}
  - \alpha\, \delta\, {\cal E}_{k(p-1)},
$$
for $k \geq 2$, $p\geq 2$. Applying again
(\ref{Res_div_Ang_mom}) to
$\delta\, {\cal E}_{k(p-1)}$ and $\delta\, {\cal
E}_{(k-1)(p-1)}$, we obtain
$$
  \delta\,(\delta\, {\cal E}_{kp})
  = k(k-1)\,{\cal E}_{(k-2)(p-2)}
  - 2k\,\alpha\, {\cal E}_{(k-1)(p-2)}
  + \alpha^2{\cal E}_{k(p-2)}.
$$
A further application of (\ref{Res_div_Ang_mom})
yields equations for the operator $\delta^n$
over tensor fields ${\cal E}_{kp}$ with $k \geq n$, $p \geq
n$.

\medskip

 \begin{theorem}\label{theorem-1} Let $k$, $n$,
$p$ be integers, $n \geq 1$, $k \geq 0$, $p \geq 0$, and
${\cal E}_{(k+n)(p+n)}$ be a field of angular
$(p+n)$-moments of ART $u_{k+n}$ of order $(k+n)$,
$\alpha(x,\xi) = \varepsilon + \i \kappa$, $\varepsilon =
{\rm const}$, $\kappa ={\rm const }$. Then,
 \begin{equation}\label{div-n_Ang_mom}
  \delta^n {\cal E}_{(k+n)(p+n)}
  = \sum\limits^n_{j=0} (-1)^j \binom{n}{j} \alpha^j
  \frac{(k+n)!}{(k+j)!}{\cal E}_{(k+j)p},
 \end{equation}
where $\displaystyle \binom{n}{j} = \frac {n!}{j!(n-j)!}
$ is a binomial coefficient.
 \end{theorem}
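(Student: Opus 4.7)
The plan is to proceed by induction on $n$, using the fundamental first-order recurrence
\begin{equation}\label{rec-ind}
\delta\,{\cal E}_{kp} = k\,{\cal E}_{(k-1)(p-1)} - \alpha\,{\cal E}_{k(p-1)}, \qquad k\geq 1,\ p\geq 1,
\end{equation}
established in \eqref{Res_div_Ang_mom-1pc}, which is valid precisely because $\alpha$ is constant in $\xi$. For the base case $n=1$, relation \eqref{rec-ind} applied to ${\cal E}_{(k+1)(p+1)}$ gives $(k+1)\,{\cal E}_{kp} - \alpha\,{\cal E}_{(k+1)p}$, and this coincides with the right-hand side of \eqref{div-n_Ang_mom} at $n=1$ since $\binom{1}{0}=\binom{1}{1}=1$ and $(k+1)!/(k+0)! = k+1$, $(k+1)!/(k+1)!=1$.

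For the inductive step I would assume \eqref{div-n_Ang_mom} holds for some $n\geq 1$ and all admissible $(k,p)$, and compute
$$
\delta^{n+1}{\cal E}_{(k+n+1)(p+n+1)}
= \delta^{n}\bigl(\delta\,{\cal E}_{(k+n+1)(p+n+1)}\bigr)
= (k+n+1)\,\delta^{n}{\cal E}_{(k+n)(p+n)} - \alpha\,\delta^{n}{\cal E}_{(k+n+1)(p+n)}
$$
by a single application of \eqref{rec-ind}. The first term fits the induction hypothesis with parameters $(k,p)$, while the second fits with parameters $(k+1,p)$; substituting the corresponding expressions from \eqref{div-n_Ang_mom} converts the problem to a purely combinatorial identity. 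After using $(k+n+1)\cdot(k+n)!=(k+n+1)!$ in the first sum and shifting the summation index $j\mapsto j-1$ in the second (so that $\alpha^{j+1}$ becomes $\alpha^j$ and the denominator $(k+1+j)!$ becomes $(k+j)!$ with $j$ ranging over $1,\dots,n+1$), both sums share the common factor $(k+n+1)!/(k+j)!\,\alpha^j\,{\cal E}_{(k+j)p}$.

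The main (and only) step requiring care is then the combination of the two sums on the range $j=0,\dots,n+1$ via Pascal's rule
$$
\binom{n}{j} + \binom{n}{j-1} = \binom{n+1}{j},
$$
with the endpoint conventions $\binom{n}{-1}=\binom{n}{n+1}=0$ so that the extremal terms $j=0$ and $j=n+1$ contribute correctly. This yields exactly \eqref{div-n_Ang_mom} with $n$ replaced by $n+1$, completing the induction. I expect the bookkeeping of the index shift and the Pascal identity to be the only nontrivial aspect; everything else is a direct use of \eqref{rec-ind} and the induction hypothesis applied with two different initial shifts.
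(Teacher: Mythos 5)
Your proposal is correct and follows essentially the same route as the paper: induction on $n$ driven by the first-order recurrence \eqref{Res_div_Ang_mom-1pc} and Pascal's rule, with the same index shift to merge the two sums. The only (immaterial) difference is that you peel the extra $\delta$ off the inside, i.e.\ write $\delta^{n+1}=\delta^{n}\circ\delta$ and apply the induction hypothesis twice with shifted parameters, whereas the paper writes $\delta^{n+1}=\delta\circ\delta^{n}$ and applies the recurrence term by term to the induction hypothesis.
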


 \begin{proof} We prove formula (\ref{div-n_Ang_mom})
by induction on $n$. For $n=1$ we have
(\ref{Res_div_Ang_mom}), 
with the integers $k$, $p$ changed to $k+1$, $p+1$. Assume
the equality (\ref{div-n_Ang_mom}) to be correct.
Applying the operator
$\delta$ and using (\ref{Res_div_Ang_mom}), 
we get 
$$
  \delta^{n+1}{\cal E}_{(k+n)(p+n)}
  = \displaystyle{ \sum\limits^n_{j=0} (-1)^j
  \binom{n}{j} \alpha^j
  \frac{(k+n)!}{(k+j)!} \big( (k+j)
  {\cal E}_{(k+j-1)(p-1)} } 
  \displaystyle{ - \alpha\, {\cal E}_{(k+j)(p-1)} \big). }
$$
Isolating the first and the last items on the right-hand side,
summing the rest in pairs and taking into account
that $\displaystyle \binom{n}{j}+\binom{n}{j-1} =
\binom{n+1}{j}$, we obtain
$$
 \begin{array}{rcl} 
  \delta^{n+1}{\cal E}_{(k+n)(p+n)}
& = & \displaystyle{ \sum\limits^n_{j=1} (-1)^j \alpha
  \frac{(k+n)!}{(k-1+j)!}\binom{n+1}{j}
  {\cal E}_{(k+j)(p-1)} } \vspace*{1mm} \\
& & {}+\, \displaystyle{ \frac{(k+n)!}{(k-1)!}
  {\cal E}_{(k-1)(p-1)}
  + (-1)^{n+1} \alpha^{n+1}{\cal E}_{(k+n)(p-1)}. }
 \end{array}
$$
Presenting the right-hand side of the obtained formula as a
sum with index $j$, $j=0, \ldots, n+1$,
$$
  \delta^{n+1}{\cal E}_{(k-1+n+1)(p-1+n+1)}
  = \sum\limits^{n+1}_{j=0} (-1)^j \alpha ^j
  \binom{n+1}{j} \frac{(k-1+n+1)!}{(k-1+j)!}
  {\cal E}_{(k-1+j)(p-1)},
$$
then changing the indexes $k-1$ and $p-1$ to $k$, $p$, we
derive an equation of a form (\ref{div-n_Ang_mom}),
with $n+1$ instead of $n$,
$$
  \delta^{n+1}{\cal E}_{(k+n+1)(p+n+1)}
  = \sum\limits^{n+1}_{j=0} (-1)^j \alpha^j
  \binom{n+1}{j} \frac{(k+n+1)!}{(k+j)!}
  {\cal E}_{(k+j)p}.
$$
This finishes the proof.
 \end{proof}
 
\medskip

A repeated application of operator $\delta$ to tensor
fields of type ${\cal E}_{0p}$ (i.e. at $k=0$) leads to
other equations. Using (\ref{div_p0_Ang_mom})
for $p \geq 2$, then the relation $\delta\,f_{p-1} =
({\mathcal H} f)_{p-2}$, where the field of angular
$(p-1)$-moments of a function $f$ is defined by
(\ref{p-1_Ang_mom_mu}),
we obtain
$$
  \delta^2{\cal E}_{0p} = ({\mathcal H} f)_{p-2}
  - \alpha\, f_{p-2} + \alpha^2{\cal E}_{0(p-2)}.
$$
A further application of the operator $\delta$ to tensor
fields ${\cal E}_{0(p+n)}$, $n \geq 1$, $p \geq 0$, leads
to a result that can be checked immediately.

\medskip

 \begin{corollary}\label{corol-1} Let $n$, $p$ be
integers, $n \geq 1$, $p \geq 0$, and ${\cal E}_{0(p+n)}$
be a field of angular $(p+n)$-moments of ART $u_{0}$ of
order $0$, $\alpha(x,\xi) = \varepsilon + \i k$,
$\varepsilon = {\rm const}$, $k ={\rm const }$. Then
 \begin{equation}\label{div-n_Ang_mom0}
  \delta^n {\cal E}_{0(p+n)}
  = \sum\limits^n_{j=0} (-1)^j \binom{n}{j} \alpha^j
  ({\mathcal H}^{n-j-1} f)_p + (-1)^n \alpha^n
  {\cal E}_{0p},
 \end{equation}
where ${\mathcal H}^{n-j-1}$ is a power of ${\mathcal
H}$, $({\mathcal H}^{n-j-1} f)_p$ is the angular
$p$-moment of the function ${\mathcal H}^{n-j-1} f$,
$({\mathcal H}^0 f)_p = f_p$.
 \end{corollary}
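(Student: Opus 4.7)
The plan is to proceed by induction on $n$, parallel to the proof of Theorem~\ref{theorem-1}, relying on two ingredients already established earlier in this section. The base case $n=1$ is exactly \eqref{div_p0_Ang_mom} with $p$ replaced by $p+1$, giving $\delta\,{\cal E}_{0(p+1)} = f_p - \alpha\,{\cal E}_{0p}$. The key identity that drives the induction is
$$\delta(g_{m+1}) = ({\mathcal H}\,g)_m,$$
valid for the angular $(m+1)$-moment of any smooth $g(x,\xi)$. Indeed, differentiation under the integral gives $\frac{\partial}{\partial x^j}\int_{\SS^2_x} g(x,\xi)\,\xi^{i_1}\cdots\xi^{i_m}\xi^j\,d\lambda_x(\xi) = \int_{\SS^2_x} ({\mathcal H} g)(x,\xi)\,\xi^{i_1}\cdots\xi^{i_m}\,d\lambda_x(\xi)$, which is precisely the step $\delta f_{p-1} = ({\mathcal H} f)_{p-2}$ invoked in the text just before the statement of the corollary.

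For the inductive step, I would take the formula at level $n$ with $p$ replaced by $p+1$ and apply $\delta$ term by term to the right-hand side. The key identity converts each moment $({\mathcal H}^{n-j-1} f)_{p+1}$ into $({\mathcal H}^{n-j} f)_p$, while \eqref{div_p0_Ang_mom} rewrites the closing $\delta {\cal E}_{0(p+1)}$ as $f_p - \alpha\,{\cal E}_{0p}$. Collecting like powers of $\alpha$ then produces two contributions on each intermediate term $\alpha^j({\mathcal H}^{n-j} f)_p$: one inherited from the shift in the ${\mathcal H}$-exponent and one picked up from the multiplication by $\alpha$. These combine through Pascal's identity $\binom{n}{j}+\binom{n}{j-1}=\binom{n+1}{j}$, exactly as at the end of the proof of Theorem~\ref{theorem-1}, and the trailing $(-1)^{n+1}\alpha^{n+1}{\cal E}_{0p}$ falls out as the new closing term.

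The main---and quite modest---obstacle is bookkeeping: one must carefully merge the shifted sum with the new $f_p$ contribution, isolate the $j=0$ and $j=n+1$ extremes of the resulting sum, and re-index to arrive at the statement with $n$ replaced by $n+1$. Because the two rewriting rules (the key identity and \eqref{div_p0_Ang_mom}) are linear in the summands and the combinatorics is identical to that of Theorem~\ref{theorem-1}, no genuinely new idea is needed beyond this recombination, and the corollary follows.
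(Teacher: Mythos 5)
Your base case and your key identity $\delta(g_{m+1}) = ({\mathcal H}g)_m$ are both correct, and induction on $n$ is the right strategy (the paper itself offers nothing beyond ``a result that can be checked immediately''). But the inductive step you describe is wrong at the crucial point. When you apply $\delta$ to the term $(-1)^j\binom{n}{j}\alpha^j({\mathcal H}^{n-j-1}f)_{p+1}$, the constant $\alpha^j$ passes through and the key identity returns the \emph{single} term $(-1)^j\binom{n}{j}\alpha^j({\mathcal H}^{n-j}f)_p$; there is no second contribution ``picked up from the multiplication by $\alpha$''. The rule that produces a $-\alpha(\cdot)$ companion is \eqref{div_p0_Ang_mom}, and it applies only to the one trailing ${\cal E}$-term: $(-1)^n\alpha^n\,\delta{\cal E}_{0(p+1)} = (-1)^n\alpha^n f_p + (-1)^{n+1}\alpha^{n+1}{\cal E}_{0p}$. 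So each $f$-moment in the sum receives exactly one contribution, not two, and Pascal's identity never enters. This is precisely the structural difference from Theorem~\ref{theorem-1}: there \emph{every} term is an ${\cal E}$-moment and branches in two under \eqref{Res_div_Ang_mom-1pc}, which is what generates the binomial coefficients; here only one term branches.

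Carrying the induction out correctly yields $\delta^n{\cal E}_{0(p+n)} = \sum_{j=0}^{n-1}(-1)^j\alpha^j({\mathcal H}^{n-j-1}f)_p + (-1)^n\alpha^n{\cal E}_{0p}$, with unit coefficients and upper limit $n-1$ (so that no negative power of ${\mathcal H}$ occurs). This agrees with the $n=2$ computation displayed just before the corollary, $\delta^2{\cal E}_{0p} = ({\mathcal H}f)_{p-2} - \alpha f_{p-2} + \alpha^2{\cal E}_{0(p-2)}$, whose middle coefficient is $1$ and not $\binom{2}{1}=2$, and with the non-stationary analogue in Theorem~\ref{theorem1-t}, which carries no binomial coefficients in its $k=0$ formula. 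If you instead try to propagate the identity exactly as printed in \eqref{div-n_Ang_mom0}, the induction does not close: for instance, at level $n+1$ the coefficient of $\alpha^{n}f_p$ comes out as $\binom{n}{n}+1=2$ rather than the required $\binom{n+1}{n}=n+1$. In short, the binomial coefficients in the printed statement are a misprint, and your appeal to Pascal's identity is an artifact of pattern-matching against Theorem~\ref{theorem-1} rather than a feature of the actual recursion; as written, your argument neither proves the printed formula (impossible) nor the corrected one.
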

 
\medskip

The equations (\ref{div-n_Ang_mom}),
(\ref{div-n_Ang_mom0})
point out that tensor fields ${\cal E}_{kp}$, $p \geq 0$,
$k \geq 0$, are expressed by a direct formula using
iterated divergence $\delta^n$ of the fields ${\cal
E}_{nq}$ with $p\geq q$, with usage of
angular moments of ART with $k\geq n$, and
angular moments of function ${\mathcal H}^{k-j}f$. We
note that angular moments of function ${\mathcal
H}^{k-j}f$ can be treated physically as multipole
sources.

We consider now non-stationary generalized ART. Let
in $R \times \SS D$ a field $u_k(t,x,\xi)$ of order $k$
(\ref{TF_k-order-t})
be given.
Generalizations of equations (\ref{Ang_p-moment}),
(\ref{Ang_0-moment}),
depending on $t$ and $x$
$$
  {\cal E}_{kp}(t,x), \quad
  {\cal E}_{k0}(t,x)
$$
of angular $p$-moments of ART of order $m$, are
defined naturally for non-stationary sources. Using
the operators $\delta$, ${\mathcal H}$ and lemma
\ref{lemma3}, 
 $$
   \Big( \frac{\partial}{\partial t} + {\mathcal H}
   + \alpha \Big)u_k = k\, u_{k-1},
   \quad k \geq 1, \quad
   \Big( \frac{\partial}{\partial t} + {\mathcal H}
   + \alpha \Big) u_0 = f,
 $$
it is not difficult to get formulas of the type
(\ref{Res_div_Ang_mom})--(\ref{div-n_Ang_mom0})
for the non-stationary case. We formulate the non-stationary
variants of (\ref{div-n_Ang_mom}),
(\ref{div-n_Ang_mom0})
as a theorem.

\medskip

 \begin{theorem}\label{theorem1-t} Let $k$, $n$,
$p$ be integers, $n \geq 1$, $k \geq 0$, $p \geq 0$, and
${\cal E}_{(k+n)(p+n)}$ be a field of angular
$(p+n)$-moments of ART $u_{k+n}(t,x,\xi)$ of order
$(k+n)$ \eqref{TF_k-order-t}, $\alpha(x,\xi) =
\varepsilon + \i \kappa$, $\varepsilon = {\rm const}$,
$\kappa = {\rm const }$. Then,
 \vspace*{-2mm}
 $$
  \delta^n{\cal E}_{(k+n)(p+n)}
  = \sum\limits^n_{j=0}(-1)^j \binom{n}{j}
  \frac{(k+n)!}{(k+j)!} \Big(
  \frac{\partial}{\partial t} + \alpha \Big)^j
  {\cal E}_{(k+j)p}, \quad k \geq 1,
 $$
 \vspace*{-3mm}
 $$
  \delta^n{\cal E}_{0(p+n)}
  = \sum\limits^{n-1}_{j=0}(-1)^j
  \Big( \frac{\partial }{\partial t} + \alpha \Big)^j
  \big( {\mathcal H}^{n-j-1}f \big)_p + (-1)^n
  \Big( \frac{\partial }{\partial t}
  + \alpha \Big)^n {\cal E}_{0p} 
 $$
Here, ${\mathcal H}^{n-j-1}$ is a power of the operator
${\mathcal H}$, $({\mathcal H}^{n-j-1} f)_p$ is an
angular $p$-moment of a function ${\mathcal H}^{n-j-1}
f$, $({\mathcal H}^0 f)_p = f_p$.
 \end{theorem}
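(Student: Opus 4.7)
The plan is to reduce the non-stationary statement to the stationary one by establishing the appropriate one-step divergence formula and then mimicking the induction used in the proof of Theorem~\ref{theorem-1}. The key observation is that Lemma~\ref{lemma3} gives $(\partial_t + \mathcal H + \alpha)u_k = k u_{k-1}$, so that $\mathcal H u_k = k u_{k-1} - \alpha u_k - \partial_t u_k$. Substituting this into
$$(\delta\,{\cal E}_{kp})^{i_1\ldots i_{p-1}}(t,x) = \int_{\SS^2_x} \xi^j \frac{\partial u_k(t,x,\xi)}{\partial x^j}\,\xi^{i_1}\cdots \xi^{i_{p-1}}\,d\lambda_x(\xi)$$
and using that $\alpha = \varepsilon + \i\kappa$ is constant (so it pulls out of the angular integral and commutes with $\partial_t$ and with $\delta$) yields the basic identity
$$\delta\,{\cal E}_{kp} = k\,{\cal E}_{(k-1)(p-1)} - \bigl(\tfrac{\partial}{\partial t} + \alpha\bigr){\cal E}_{k(p-1)},\qquad k\geq 1,\; p\geq 1,$$
with the $k=0$ analogue $\delta\,{\cal E}_{0p} = f_{p-1} - (\partial_t + \alpha){\cal E}_{0(p-1)}$.

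For the first formula I would now proceed by induction on $n$, exactly as in the proof of Theorem~\ref{theorem-1}, with the scalar multiplier $\alpha$ systematically replaced by the operator $\partial_t + \alpha$. The base case $n=1$ is the displayed identity above with $k,p$ shifted by one. For the induction step, applying $\delta$ and using that $\delta$ commutes with $\partial_t + \alpha$, one obtains
$$\delta^{n+1}{\cal E}_{(k+n)(p+n)} = \sum_{j=0}^n(-1)^j\binom{n}{j}\frac{(k+n)!}{(k+j)!}\bigl(\tfrac{\partial}{\partial t}+\alpha\bigr)^j\Bigl[(k+j)\,{\cal E}_{(k+j-1)(p-1)} - \bigl(\tfrac{\partial}{\partial t}+\alpha\bigr){\cal E}_{(k+j)(p-1)}\Bigr].$$
Separating the first and last terms of the resulting sum, pairing the remaining terms with a common factor, and using Pascal's identity $\binom{n}{j}+\binom{n}{j-1}=\binom{n+1}{j}$, one reindexes and shifts $k,p$ down by one to recover the claimed formula with $n$ replaced by $n+1$.

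For the $k=0$ formula the induction is the same in spirit, but starts from $\delta\,{\cal E}_{0p} = f_{p-1}-(\partial_t+\alpha){\cal E}_{0(p-1)}$ and uses the auxiliary identity $\delta\,f_{p-1} = (\mathcal H f)_{p-2}$, which follows directly from $\xi^j \partial_{x^j} = \mathcal H$ applied inside the angular integral defining $f_{p-1}$ in \eqref{p-1_Ang_mom_mu}. At each induction step one splits $\delta^{n+1}{\cal E}_{0(p+n+1)}$ using the basic identity, applies the hypothesis to $\delta^n{\cal E}_{0(p+n)}$, and then replaces $\delta f_{p-1}$ by $(\mathcal H f)_{p-2}$, which bumps the power of $\mathcal H$ by one exactly as required by the claimed formula.

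I do not expect any genuine obstacle beyond bookkeeping: since $\alpha$ is constant, the operators $\partial_t + \alpha$, $\delta$ and $\mathcal H$ all commute pairwise in the sense needed, so the stationary proof transfers verbatim once $\alpha$ is reinterpreted as $\partial_t + \alpha$. The only mildly delicate point is verifying commutation of $\partial_t + \alpha$ with $\delta$ (immediate, since $\delta$ acts only on the spatial variables and $\alpha$ is a constant), and keeping the binomial coefficient manipulations consistent after reindexing $k\mapsto k-1$, $p\mapsto p-1$ at the end of the inductive step.
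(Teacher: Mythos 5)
Your proposal is correct and follows exactly the route the paper intends: the paper does not write out a proof of this theorem but states that it follows from Lemma~\ref{lemma3} and the same computations as in \eqref{Res_div_Ang_mom}--\eqref{div-n_Ang_mom0}, i.e.\ the one-step identity $\delta\,{\cal E}_{kp} = k\,{\cal E}_{(k-1)(p-1)} - (\partial_t+\alpha){\cal E}_{k(p-1)}$ and the induction of Theorem~\ref{theorem-1} with the scalar $\alpha$ replaced by the commuting operator $\partial_t+\alpha$. You have simply filled in the bookkeeping the authors omit, and your verification that $\partial_t+\alpha$ commutes with $\delta$ (and the use of $\delta f_{p-1}=(\mathcal H f)_{p-2}$ in the $k=0$ case) is exactly what is needed.
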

 
\medskip

Consider the partial case of dependance of the source
distribution $f(x,\xi)$ on points $x \in D$ only, i.e.
$f(x,\xi) \equiv f(x)$.

We aim for finding scalar fields ${\mathcal{E}}_{k0}$ of angular
$0$-moments over ART of order $k$, $k \geq 1$, determined
by the formula (\ref{Ang_0-moment}). Substituting in
(\ref{Ang_0-moment}) the definition (\ref{TF_k-order}) of
ART of order $k$ and taking into account that $s=|x-q|$,
$\displaystyle \xi = \frac {x-q}{|x-q|}$ and
$\displaystyle{ dV_q = s^2 d s\,d\lambda_x(\xi) }$, we
obtain at $\alpha (x,\xi) = {\rm i} \kappa$, $\kappa =
{\rm const}$,
\begin{align*}
 {\mathcal E}_{k0}(x)
  =  \int\limits_{\SS^2_x}
  \hspace*{-1mm} u_k(x,\xi)\,
  d \lambda_x(\xi)   
&  =  \int\limits_{\SS^2_x}
  \int\limits_0^{\infty} \hspace*{-1mm} s^k \exp
  \{ {\rm i} \kappa s \}\,
  f(x-s\xi)\,d s\,d\lambda_x(\xi)\\
&  = 
  \int\limits_{R^3} 
  \frac{ \exp \{ {\rm i} \kappa |x-q|\}}
  {|x-q|^{2-k}} f(q)\, dV_q
\end{align*}
for $\, k=1,2, \dots$. The right-hand side of the last
formula is, for $k=1$, a volume potential satisfying the
Helmholtz equation
$$
  \Delta {\mathcal E}_{10}
  + \kappa^2{\mathcal E}_{10} = -4\pi f
$$
and Sommerfeld's radiation condition.

We denote by $G_k(r)$ the functions $r^{k-2}
e^{-\alpha r}$ for $k=1,2,\ldots$, where $r=|x-q|$,
$\alpha = \varepsilon + {\rm i} \kappa$, and suppose that the
constants $\varepsilon \geq 0$ and $\kappa$ do not vanish
simultaneously. An application of the operator $(\Delta -
\alpha^2)$ on these functions coincides with the radial part
of the Laplace operator since $G_k$ depends on $r$,
 \begin{equation}\label{Lap_op-Rad_part}
   \Big( \frac{\partial^2}{\partial r^2}
   + \frac{2}{r} \frac{\partial}{\partial r} \Big)
   G_k(r) = (k-1)(k-2)G_{k-2} - 2\alpha (k-1) G_{k-1}
   + \alpha^2 G_k.
 \end{equation}
It is easy to see that the operator $(\Delta - \alpha^2)$
connects the functions $G_{k}$ with different indexes $k$
with each other. Here are concrete formulas for
$G_{k}(r)$ for $k=2,3,4,5$,
 $$
  \begin{array}{ll}
   (\Delta - \alpha^2) G_2 = -2 \alpha G_{1},
   \vspace*{2mm}
   & (\Delta - \alpha^2) G_3 = 2 G_1 - 4 \alpha G_2,\\
   (\Delta - \alpha^2) G_4 = 6 G_2 - 6 \alpha G_3,
   & (\Delta - \alpha^2) G_5 = 12G_3 - 8 \alpha G_4.
  \end{array}
 $$
The relations (\ref{Lap_op-Rad_part}) allow to establish
classes of solutions for homogeneous equations with
operators $(\Delta - \alpha^2)^k$ with their
fundamental solutions. In particular if $r \neq 0$ then
$G_j(r)$, $j=1,\ldots,k$, are the solutions of homogeneous
equation with the operator $(\Delta - \alpha^2)^k$, $k$
is positive integer.

We derive a differential equation with partial
derivatives whose solution coincides with the angular
moment ${\mathcal E}_{20}$.

\medskip

 \begin{proposition}\label{propos1}
Suppose that $f(x)$ is a finite and infinitely differentiable function in
$\RR^3$, $\alpha(x,\xi) = \varepsilon + {\rm i}
\kappa $ is constant, $\varepsilon \geq 0$, $\alpha \neq
0$. If the angular moment ${\mathcal E}_{20}$ is defined
by \eqref{Ang_p-moment} for
$k=2$, then the function $\displaystyle{ u(x) = \frac{
{\mathcal E}_{20}(x)}{8\pi\alpha} }$ is a solution of the
equation
$$
  (\Delta - \alpha ^2)^2u = f.
$$
 \end{proposition}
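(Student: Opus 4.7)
The plan is to realize $\mathcal{E}_{20}$ explicitly as the convolution $\mathcal{E}_{20}(x)=\int_{\mathbb{R}^3} G_2(|x-q|)\,f(q)\,dV_q$, following the spherical-coordinate computation already performed in the excerpt for $\mathcal{E}_{k0}$ (with $e^{-\alpha r}$ in place of $e^{\mathrm{i}\kappa r}$ once $\alpha=\varepsilon+\mathrm{i}\kappa$ is allowed to be complex). Here $G_2(r)=e^{-\alpha r}$ is bounded and smooth on all of $\mathbb{R}^3$, so standard differentiation under the integral sign is justified because $f$ is smooth with compact support.

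Next I would apply $\Delta-\alpha^2$ once. Because $G_2$ is smooth across the origin, the operator can be moved under the integral, and I would invoke the identity
$$(\Delta-\alpha^2)G_2 = -2\alpha\,G_1$$
already listed in the excerpt (a direct consequence of \eqref{Lap_op-Rad_part} specialized to $k=2$). This yields
$$(\Delta-\alpha^2)\mathcal{E}_{20}(x) \;=\; -2\alpha\int_{\mathbb{R}^3} G_1(|x-q|)\,f(q)\,dV_q.$$

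For the second application of $\Delta-\alpha^2$ I would use the distributional identity
$$(\Delta-\alpha^2)G_1 \;=\; (\Delta-\alpha^2)\!\left(\frac{e^{-\alpha r}}{r}\right) \;=\; -4\pi\,\delta$$
on $\mathbb{R}^3$, valid for any $\alpha$ with $\operatorname{Re}\alpha\ge 0$ and $\alpha\ne 0$ (in the purely oscillatory case $\varepsilon=0$, $\kappa\ne 0$, the relevant fundamental solution is selected by Sommerfeld's radiation condition, mentioned just before the proposition). Combining the two applications I would conclude
$$(\Delta-\alpha^2)^2\mathcal{E}_{20}(x) \;=\; -2\alpha\,(-4\pi)\,f(x) \;=\; 8\pi\alpha\,f(x),$$
and therefore $u=\mathcal{E}_{20}/(8\pi\alpha)$ solves $(\Delta-\alpha^2)^2 u=f$.

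The routine parts are the change of variables producing the convolution form of $\mathcal{E}_{20}$ and the repeated use of the $G_k$ recurrence. The main technical obstacle is the second step: $G_1$ has an $r^{-1}$ singularity at the origin, so $\Delta-\alpha^2$ cannot be moved under the integral naively. I would handle this either by writing $G_1=1/r+(e^{-\alpha r}-1)/r$, using the classical $\Delta(1/r)=-4\pi\delta$ and noting that the remainder is continuous at $0$ so contributes only $-\alpha^2$ times a regular integral plus explicit boundary terms that cancel in the limit, or equivalently by testing against a smooth compactly supported function and integrating by parts on $\mathbb{R}^3\setminus B_\epsilon(x)$ and passing to $\epsilon\to 0$. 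Once $G_1$ is identified as the fundamental solution of $\Delta-\alpha^2$ up to the factor $-4\pi$, the statement follows immediately.
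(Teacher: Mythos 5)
Your proposal follows essentially the same route as the paper's proof: write $\mathcal{E}_{20}$ as the convolution of $f$ with $e^{-\alpha|x-q|}$, apply $(\Delta-\alpha^2)$ once to obtain the kernel $-2\alpha\,e^{-\alpha|x-q|}/|x-q|$, and then apply it a second time using the fact that $e^{-\alpha r}/r$ is (up to the factor $-4\pi$) the fundamental solution of $\Delta-\alpha^2$, yielding $8\pi\alpha f(x)$. The only difference is that you spell out the distributional justification of the second step across the $r^{-1}$ singularity, which the paper leaves implicit; the argument is otherwise identical.
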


 \begin{proof}
Applying on both parts of the formula
$$
  {\mathcal E}_{20}(x)
  = \int_{R^3} e^{-\alpha |x-q|}
  f(q)\,dV_q,
$$
the operator $(\Delta_x - \alpha^2)$, we get
 \begin{equation}\label{Bi_Helm}
  (\Delta_x - \alpha^2) {\mathcal E}_{20}(x)
  = \int_{R^3}f(q)
  \big( ( \Delta_x - \alpha^2 )
  e^{-\alpha |x-q|} \big) dV_q
 \end{equation}
and, after simple calculations,
$$
  (\Delta_x - \alpha^2)e^{-\alpha |x-q|}
  = -2\alpha \frac{e^{-\alpha |x-q|}}{|x-q|}.
$$
Representing
(\ref{Bi_Helm}) 
using the last expression,
$$
  (\Delta _x-\alpha ^2) {\mathcal E}_{20}(x)
  = \int_{R^3}(-2\alpha)
  \frac{e^{-\alpha |x-q|}}{|x-q|}f(q)\,dV_q,
$$
and applying to it the operator $\Delta_x - \alpha^2$
once more, we obtain
$$
  (\Delta_x - \alpha^2)^2 {\mathcal E}_{20}(x)
  = -2\alpha \int_{R^3}
  (\Delta _x-\alpha ^2) \frac{e^{-\alpha |x-q|}}{|x-q|}
  f(q)\,dV_q = 8 \pi \alpha f(x),
$$
which proves the statement.
 \end{proof}


\section{Conclusion}

In this article we conisdered the generalized attenuated ray transforms
(ART) for
functions $f(x,\xi)$ and symmetric tensor fields
$w(x,\xi)$ defined on the spherical bundle (phase space).
The investigated transforms are connected with attenuated ray
transform arising in computerized, emission and tensor
tomography problems, wave optics and photometry and
integral geometry.
The generalizations were performed in four directions. At
first, a source distribution $f$ and an absorption
coefficient
$\varepsilon$ depended on the vector of
direction $\xi$. Secondly, the coefficient $\varepsilon$
was complex-valued, and, third, the weights of
generalized ART had a more general form and contained
monomials. The last generalization was that our
mathematical model contained internal sources $f(t,x,\xi)$
(in scalar case) or symmetric tensor fields $w(t,x,\xi)$
that depended on time.

The generalization of ART operator led to stationary
ART $u_k(x,\xi)$ and non-stationary ART $u_k(t,x,\xi)$ of
order $k$ over functions $f$ and, in particular, over
$m$-tensor fields $w$. They could be treated as the
integral moments of a source distribution $f$ or of a
symmetric tensor field $w$ with components $w_{i_1 \ldots
i_m}$ with a weight generated by an exponential function.
Connections between ART of different orders are
established. Differential equations whose solutions are
the generalized ART-operators of order $k$
were derived. In particular,
the differential equations of the first order coincided
with stationary and non-stationary transport equations
with complex-valued absorption coefficient, but without
integral part that is responsible for the scattering phenomenon
\cite{KeizZwa72}. Uniqueness theorems for boundary-value
problems in stationary case, and initial boundary-value
problems in non-stationary case have been proved.

The back-projection operator (BPO) is an important
instrument regarding the inversion of tomographic
operators within the computer, emission and tensor
tomography. Tensor fields of the integral angular
$p$-moments are one of possible generalizations of BPO.
As well as the back-projection operators, angular moments
do not allow to find sought-for object by its tomographic
data, but they show certain characteristic features of
the object. Besides that they can be treated as 
conservation laws or applied as special additional
(a-priori) information for the development of inversion
algorithms. In particular, if $f(x,\xi)$ has the form
(\ref{Part-TFk}), the angular moment (\ref{Ang_p-moment})
coincides with the back-projection operator over the longitudinal
ray transform. Properties of the operators of angular
moments of order $p$ were investigated and connections
between the moments of different orders were detected.

There exist close connections of ART of order $k$
with different problems of optics, tomography and
integral geometry.
According to optical terminology it can be seen easily
that $u_1(x,\xi)$ for $\varepsilon \equiv 0$ and $\rho
(x,\xi) = Im \alpha = \mbox{const}$ is the ideal wave
image, and $u_0(x,\xi)$ for $\rho \equiv 0$ and
$\varepsilon = \mbox{const} $ is the ideal photometric
image \cite{Kirt75,BornWolf73,Kirt83}. In terms of computerized tomography the operator
(\ref{TF_k-order}) for
$\rho=0$, $\varepsilon=0$ can be seen as fan-beam or
cone-beam transform, and as well as the well-known Radon or
ray transform. In more complicated mathematical models,
for example in emission tomography, the operator
(\ref{TF_k-order}) is the standard attenuated ray transform,
and a certain natural generalization of the integrand leads
to the longitudinal ray transform of symmetric
tensor fields and to integral moments of generalized
tensor fields \cite{Shar94}.

The generalized ART that has been investigated in the article and angular
moments are connected in their partial cases with various
transforms of tomographic types and back-projection
operators. So it arises as natural settings of inverse
problems of determination of scalar, vector or tensor
fields by their known ART of order $k$. These inverse
problems can be treated as inverse problems which consists of determining its
the right-hand side of a
generalized transport equation. In that sense the concepts that
have been considered in the article have good potential
for further development and
investigation in these fields.
In particular, the generalized ART can be extended to the
case of an absorption coefficient $\varepsilon$ that depends on
time $t$. Besides that the extension of ART to the case of a
Riemannian metric including stationary and
non-stationary settings looks promising.
The mentioned generalizations might lead to the construction and
subsequent investigations of spacious mathematical models
for dynamic refractive tensor tomography.


\section*{Acknowledgements}

This work was supported by the
Programm for Basic Researches SB RAS No. I.1.5 (project
0314-2016-0011), Russian Foundation for Basic Research
(RFBR) and German Science Foundation (DFG) according to
the research project 19-51-12008, and German Science
Foundation (DFG) under Lo~310/17-1.


\newpage

\bibliographystyle{siam}
\bibliography{references_arXiv}

\end{document}